\theoremstyle{plain}
\newtheorem{lem}{Lemma}[section]
\newtheorem{cor}[lem]{Corollary}
\newtheorem{prop}[lem]{Proposition}
\newtheorem{thm}[lem]{Theorem}
\theoremstyle{definition}
\newtheorem{ex}[lem]{Example}
\newtheorem{rem}[lem]{Remark}
\newtheorem{dfn}[lem]{Definition}
\newtheorem{conj}[lem]{Conjecture}
\newtheorem{claim}[lem]{Claim}
\newcommand{\RomanNumeralCaps}[1]
    {\MakeUppercase{\romannumeral #1}}
\newcommand{\so}{\mathfrak{so}}
\newcommand{\fsl}{\mathfrak{sl}}
\begin{document}

\title[Feigin-Stoyanovsky's principal subspaces]{Jet schemes, Quantum dilogarithm and Feigin-Stoyanovsky's principal subspaces}

\author{Hao Li}
\address{Yau Mathematical Sciences Center, Tsinghua, Beijing 100084}
\email{haoli2021@mail.tsinghua.edu.cn}
\author{Antun Milas}
\address{Department of Mathematics and Statistics, SUNY-Albany, NY 12222}
\email{amilas@albany.edu}

\maketitle

\begin{abstract}
We analyze the structure of Feigin-Stoyanovsky's principal subspaces of affine Lie algebra from the jet algebra viewpoint. For type $A$ level one principal subspaces,  we show that their shifted multi-graded Hilbert series can be expressed either using the quantum dilogarithm or as certain generating functions ``counting" finite-dimensional representations of $A$-type quivers. This notably results in novel fermionic character formulas for these principal subspaces. Moreover, our result implies that all level one principal subspaces of type $A$ are ``classically free" as vertex algebras. 

We also analyze infinite jet algebras associated to principal subspaces of affine vertex algebras $L_{1}(\mathfrak{so}_5)$, $L_{1}(\mathfrak{so}_8)$ and $L_1(\frak{g}_2)$. We derive a new character formula for the principal subspace of $L_1(\mathfrak{so}_5)$, proving that it is classically free, and present evidence that the principal subspaces of $L_1(\mathfrak{so}_8)$ and of $L_1(\frak{g}_2)$ are also classically free.
\end{abstract}

\section{Introduction}

\vskip 5mm
Associated varieties and associated schemes are important geometric object attached to vertex algebras.
They have been explored in prior works such as \cite{arakawa2018joseph,arakawa2019singular,arakawa2018arc,arakawa2016lectures,van2018chiral}, and have also garnered attention in theoretical physics, particularly due to their relevance to four-dimensional $\mathcal{N} = 2$ superconformal field theory (SCFT) \cite{beem2015infinite}.

A prominent result established by Arakawa and Li states that there is a surjective map $\psi$ from the infinite jet algebra $J_\infty(R_V)$ of a vertex algebra $V$ to the associated graded algebra ${\rm gr}(V)$ \cite{arakawa2012remark}, where $R_V$ denotes Zhu's Poisson algebra of $V$. Adopting the nomenclature of \cite{van2018chiral}, we say that $V$ is {\em classically free} when $\psi$ is injective. Another phrasing for injectivity is that $V$ is ``chiralization" of its Poisson algebra \cite{li2020some}. Vertex algebras
with this property are relatively common and we expect that they include many interesting rational vertex algebras and of course large families of irrational vertex algebras. It is relatively easy to find counterexamples to injectivity of $\psi$  \cite{arakawa2019singular}, \cite{van2018chiral}. 
Nevertheless, description of the kernel of the $\psi$-mapping is an intriguing and challenging problem. 
Very recently, Andrews, Van Ekeren and Heluani  \cite{van2020singular} unveiled a remarkable $q$-series identity that enabled them to describe the kernel of $\psi$ for the $c=\frac12$ Ising Virasoro vertex algebra.

As in \cite{li2020some} (see also \cite{jennings2020further}), our current focus pertains to associated schemes and infinite jet algebras associated with Feigin-Stoyanovsky's principal subspaces \cite{feigin1993quasi,milas2012lattice}.
Despite significant advancements in our understanding of principal subspaces and their characters with the help of vertex
algebra tools (cf. \cite{georgiev,capparelli2006rogers,calinescu2010vertex,primc}), we have encountered a limitation in the existing character formulas to 
establish the injectivity of the $\psi$ map. This primarily comes from the fact that most character formulas employ simple roots, rather than all positive roots (for some recent result on this subject see  for instance \cite{penn2018vertex}, \cite{butorac2020note}). In this paper, in conjunction with \cite{li2020some} and an upcoming sequel, we try to fill the gap between two distinct approaches to principal subspaces of affine vertex algebras by studying character formulas coming from all positive roots.

For instance, as demonstrated in the theorem below, the character of the principal subspace of $L_1(\mathfrak{sl}_n)$, obtained using positive roots (depicted on the left-hand side of equation (\ref{main})), is considerably more intricate when contrasted with the subspace obtained using simple roots (shown on the right-hand side of equation (\ref{main})).

\begin{thm}\label{thm1}
Let $A$ be the Cartan matrix $(\langle \alpha_{i},\alpha_{j} \rangle)_{1\leq i,j\leq n-1}$ of type $A_{n-1}$, $n \geq 2$, that is
$$ \langle \alpha_i,\alpha_j \rangle= \left\{ \begin{array}{cc}  2 \  &  \ i=j  \\ -1 \ &  \ |i -j|=1 \\ 0 \ & \ {\rm otherwise} \end{array} \right.$$
and
$${\bf m}=(m_{1,2},....,m_{n-1,n})=(m_{i,j})_{1 \leq i < j  \leq n} .$$

\begin{itemize}

\item[(a)]  We have $(q,x_1,...,x_n)$-series identities:


 \begin{align} \label{main} \sum_{{\bf m} \in \mathbb{N}_{\geq 0}^{n(n-1)/2}} \frac{{\displaystyle q^{B({\bf m})}} x_1^{\lambda_1} \cdots x_{n-1}^{\lambda_{n-1}}}{\displaystyle\prod_{1\leq i<j\leq n }(q)_{n_{i,j}}}=\displaystyle \sum_{\mathbf{k}=(k_{1},\ldots,k_{n-1})\in \mathbb{N}^{n-1}}\frac{q^{\frac12\mathbf{k}A\mathbf{k}^{\top}} x_1^{k_1} \cdots x_{n-1}^{k_{n-1}} }{(q)_{k_{1}}(q)_{k_{2}}\ldots (q)_{k_{n-1}}},\end{align}
where
\begin{align*}B({\bf m})=&\sum_{\substack{1\leq i_1 < j_1\leq n\\1\leq i_2 < j_2\leq n\\1\leq i_1 < i_2\leq n\\1\leq j_1 < j_2\leq n\\ j_{1}>i_{2}+1}  } m_{i_{1},j_{1}}m_{i_{2},j_{2}}+\sum_{\substack{1\leq i_1 < j_1\leq n\\1\leq i_2 < j_2\leq n\\1\leq i_{1}=i_{2}\leq n \\ 1\leq j_{1}\leq j_{2}\leq n }  } m_{i_{1},j_{1}}m_{i_{2},j_{2}}\\ &+\sum_{\substack{1\leq i_1 < j_1\leq n\\1\leq i_2 < j_2\leq n\\1\leq j_{1}=j_{2}\leq n \\  1\leq i_1<i_2\leq n}  } m_{i_{1},j_{1}}m_{i_{2},j_{2}}+\sum_{{j<i<i+1<j'}}m_{i,i+1}m_{j,j'}, \end{align*}
and $$\lambda_i=\displaystyle \sum _{ 1 \leq s\leq i \atop i < \ell \leq n}m_{s,\ell},$$



\item[(b)] we have
\begin{align}\label{main4} \sum_{{\bf m} \in \mathbb{N}_{\geq 0}^{n(n-1)/2}} \frac{{\displaystyle q^{B'({\bf m})}x_1^{\lambda_1} \cdots x_{n-1}^{\lambda_{n-1}}}}{\displaystyle\prod_{1\leq i<j\leq n }(q)_{m_{i,j}}}=\displaystyle \sum_{\mathbf{k}=(k_{1},\ldots,k_{n-1})\in \mathbb{N}^{n-1}}\frac{q^{\frac12\mathbf{k}A\mathbf{k}^{\top}} x_1^{k_1} \cdots x_{n-1}^{k_{n-1}}}{(q)_{k_{1}}(q)_{k_{2}}\ldots (q)_{k_{n-1}}},\end{align}
where
\begin{align*} B'({\bf m})=\sum_{\substack{1\leq i_1 < j_1\leq n\\1\leq i_2 < j_2\leq n\\1\leq i_1 \leq i_2\leq n\\1\leq j_1 \leq j_2\leq n\\j_{1}>i_2 }} m_{i_{1},j_{1}}m_{i_{2},j_{2}},\end{align*}
and $$\lambda_i=\displaystyle \sum _{ 1 \leq s\leq i \atop i < \ell \leq n}m_{s,\ell}.$$

\end{itemize}

\end{thm}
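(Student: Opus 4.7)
The right-hand side is indexed by simple roots of $A_{n-1}$, while the left-hand side is indexed by all positive roots $\alpha_{i,j} := \alpha_i + \alpha_{i+1} + \cdots + \alpha_{j-1}$, with $m_{i,j}$ the multiplicity of $\alpha_{i,j}$ and $\lambda_i$ the coefficient of $\alpha_i$ in $\sum m_{i,j}\alpha_{i,j}$. My plan is to derive both identities as consequences of the quantum dilogarithm identity of Reineke (equivalently, the Kontsevich--Soibelman wall-crossing formula) for the Dynkin quiver of type $A_{n-1}$. The base case $n=2$ is trivial, since both sides collapse to $\sum_{k \geq 0} q^{k^2} x_1^k/(q)_k$, which isolates the nontrivial content in the factorization step below.

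For the inductive step, I would work in the quantum torus generated by $q$-commuting formal variables $X_{i,j}$ for $1 \leq i < j \leq n$, with commutation relations determined by the antisymmetrized Euler form of a fixed orientation of the $A_{n-1}$ quiver, and set $\mathbb{E}(X) := \sum_{m \geq 0} X^m/(q)_m$. Iterating the pentagon identity for $\mathbb{E}$ produces Reineke's factorization
\[
\mathbb{E}(X_{1,2})\, \mathbb{E}(X_{2,3}) \cdots \mathbb{E}(X_{n-1,n}) \;=\; \prod_{(i,j)\in\Delta^+}^{\to} \mathbb{E}(X_{i,j}),
\]
where the right-hand product is taken in a chosen convex order on $\Delta^+$. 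Expanding each factor as a series and collecting the coefficient of $x_1^{\lambda_1} \cdots x_{n-1}^{\lambda_{n-1}}$ on both sides---while tracking the $q$-powers accumulated as all quantum-torus generators are reordered into a standard form---would yield the desired identity. Parts (a) and (b) would then correspond to two different convex orders on $\Delta^+$: a ``root-system'' order producing the four-piece quadratic form $B(\mathbf{m})$, and the lexicographic order on pairs $(i,j)$ producing the more symmetric form $B'(\mathbf{m})$.

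The main technical obstacle is the combinatorial bookkeeping that identifies the accumulated $q$-power exactly with $B(\mathbf{m})$, respectively $B'(\mathbf{m})$. For $B'$ the contribution from a pair of roots depends only on whether the intervals $[i_1, j_1-1]$ and $[i_2, j_2-1]$ (with $i_1 \leq i_2$, $j_1 \leq j_2$) overlap, and each pentagon swap contributes one predictable term. For $B$ one must instead distinguish four qualitatively different geometric positions of a pair of positive roots---disjoint with a gap, sharing the left endpoint, sharing the right endpoint, and the asymmetric case of a simple root strictly nested inside a longer root---and verify that each contributes exactly the prescribed quadratic term. I expect this to require an induction on the length of the convex order, together with a careful case analysis of the pentagon moves; a secondary verification is that the substitution $k_i = \lambda_i = \sum_{s \leq i < \ell} m_{s,\ell}$ correctly recovers the simple-root-side exponent $\tfrac12 \mathbf{k}^\top A \mathbf{k}$ in $(q,x_1,\ldots,x_n)$-series.
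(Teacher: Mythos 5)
Your outline matches the paper's strategy for part (a): rewrite the simple-root side as a product of quantum dilogarithms in a $q$-commuting alphabet $x_i x_{i+1}=qx_{i+1}x_i$ (other pairs commuting), iterate the pentagon identity to refactor it as an ordered product over all positive roots, expand each factor, reorder the accumulated monomials into normal form, and then match the resulting $q$-exponent with $B(\mathbf{m})$ by induction on $n$. For part (b), however, the paper takes a different route from the one you propose: rather than choosing a second convex order and refactoring again, it invokes the Rim\'anyi--Weigandt--Yong quiver-representation identity (their Corollary 1.5), in which the refined side is a sum over finite-dimensional representations $\eta$ of the equioriented $A_{n-1}$ quiver weighted by the orbit codimension $\mathrm{codim}(\eta)$, and then verifies inductively that $\mathrm{codim}(\eta)=B'(\mathbf{m})-\tfrac12\mathbf{k}^{\top}A\mathbf{k}$. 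Your suggestion that a lexicographic convex order in the dilogarithm factorization would directly produce $B'$ is plausible in spirit---the paper itself remarks that Keller's dilogarithm identity for type $A$ quivers is closely related to the RWY formula---but you neither pin down which order you mean nor check that it yields exactly this quadratic form, so that step is asserted rather than established.

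More importantly, the proposal stops precisely where the content of the theorem lives. Both of the paper's inductive arguments fix a linear solution of $U\mathbf{m}=\mathbf{k}$ by taking $m_{i,i+1}$ as the dependent variables, and then carry out a six-case term-by-term comparison of the coefficients of all monomials involving the new variables $m_{i,n+1}$ (three geometric configurations of a product $m_{i_2,j_2+1}m_{i_1,n+1}$, two ranges of $i$ for the mixed product $k_i m_{i_1,n+1}$, and the diagonal $m_{i,n+1}^2$); that verification is what actually equates the quadratic forms on both sides. You correctly identify this bookkeeping as the ``main technical obstacle'' and then defer it entirely, so the argument as written establishes only that the identity has the expected shape, not the identity itself. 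It is a plan rather than a proof, and the plan diverges from the paper's for part (b) without supplying the missing verification.
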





\begin{ex}
For  $\mathfrak{sl}_{3}$, we have $\lambda_1=m_{1,2}+m_{1,3},\lambda_2=m_{2,3}+m_{1,3}$.
Letting new variables $m=n_1$, $n=n_2$ and $n_1=m_{1,2},n_2=m_{1,3}$ and $n_3=m_{2,3}$, both equations (\ref{main}) and (\ref{main4}) give the following identity (also mentioned in \cite{feigin1993quasi}):
\begin{equation} \label{sl3}
\sum_{m,n \geq 0} \frac{q^{m^2+n^2-mn} x^m y^n}{(q)_m (q)_n}=\sum_{n_1,n_2,n_3 \geq 0} \frac{x^{n_1+n_3} y^{n_2+n_3} q^{n_1^2+n_2^2+n_3^2+ n_1 n_3+n_2 n_3}}{(q)_{n_1} (q)_{n_2} (q)_{n_3}}.
\end{equation}
\end{ex}

Notice that both the left-hand sides and right-hand sides of our identities in Theorem 1.1 are "stable" with respect to the rank $n$. In other words, if
we let $m_{\cdot,n}=0$ in the theorem, then all identities are valid for $\mathfrak{sl}_{n-1}$ (i.e. for type $A_{n-2}$).

In this work we are also concerned with more general $q$-series identities
\begin{equation} \label{generic}
\sum_{{\bf n} \in \mathbb{Z}^k_{\geq 0}} \frac{q^{ \frac12 {\bf n} A {\bf n}^\top } {\bf x}^{{\bf n}}}{(q)_{n_1} \cdots (q)_{n_k}}
=\sum_{{\bf m} \in \mathbb{Z}^{\ell}_{\geq 0}} \frac{q^{B ({\bf m})} {\bf x}^{U{\bf m}}}{(q)_{m_1} \cdots (q)_{m_\ell}},
\end{equation}
where ${\bf x}^{\bf n}:=x_1^{n_1} \cdots x_k^{n_k}$,  $A$ is a positive definite integral $k\times k$ matrix (e.g. Cartan matrix), $B({\bf m})$ is a quadratic form \footnote{Formula for $B({\bf m})$ 
is often complicated so we prefer not to use its matrix representation.} on $\ell$-dimensional space (in general $\ell \neq k$) and $U$ is a $k \times \ell $ matrix.
We say that the $q$-series identity (\ref{generic}) is {\em equivalent} to
\begin{equation} \label{generic2}
\sum_{{\bf n} \in \mathbb{Z}^k_{\geq 0}} \frac{q^{\frac12 {\bf n} \tilde{A} {\bf n}^\top } {\bf x}^{{\bf n}}}{(q)_{n_1} \cdots (q)_{n_k}}
=\sum_{{\bf m} \in \mathbb{Z}^\ell_{\geq 0}} \frac{q^{\tilde{B}( {\bf m})} {\bf x}^{ U{\bf m}}}{(q)_{m_1} \cdots (q)_{m_\ell}},
\end{equation}
if for every ${\bf n}$ and ${\bf m}$ as above with $U{\bf m}={\bf n}$, relation $$B ({\bf m}) - \frac12 {\bf n} A {\bf n}^\top=\tilde{B}({\bf m}) - \frac12 {\bf n} \tilde{A} {\bf n}^\top$$
holds. Under this condition, (\ref{generic}) implies (\ref{generic2}) and vice-versa as all ${\bf x}$ coefficients are equal. If we view (\ref{generic}) as an identity 
among two multi-graded Hilbert series of an algebra (or module), then {\em equivalence} of (\ref{generic}) and (\ref{generic2}) simply means that (\ref{generic2}) can be viewed 
as an identity between two Hilbert series of the same algebra (or module), but we performed a $q$-shift on each graded component of the original algebra with respect to the ${\bf x}$-grading.

The central idea within the proof of Theorem \ref{thm1} revolves around demonstrating that the equations (\ref{main}) and (\ref{main4}) are indeed equivalent with a pair of equations derived from 
from two distinct sources: on one hand, the quantum dilogarithm identities, and on the other hand, the enumeration of quiver representations.
Given that the right-hand side of equations (\ref{main}) and (\ref{main4}) is the known character formula of the {principal subspace} $W(\Lambda_0)$ of the level one affine vertex algebra $L_1(\fsl_n)$, as documented in references \cite{feigin1993quasi,calinescu2010vertex}, a direct consequence of Theorem \ref{thm1} are two novel character formulas for the principal subspace. Furthermore, building upon an insight from \cite{feigin2009pbw} concerning PBW filtrations, we carefully construct a sequence of filtrations applied to specific infinite jet algebras to facilitate an analysis of their Hilbert series. Employing this approach, we establish a result previously outlined in \cite[Theorem 5.7]{li2020some}, namely, the demonstration of the classical freeness of $W(\Lambda_0)$. Our outcome additionally provides a novel combinatorial monomial basis for $W(\Lambda_0)$, as detailed in reference \cite{li2020some}.


Our paper is structured as follows. In Section 2, we revisit standard definitions pertaining to the the $m$-jet algebra, arc algebra, affine vertex algebra and of the Feigin-Stoyanovsky principal subspace. For the sake of simplicity, we only consider principal subspaces for the vacuum representation. Section 3 introduces two distinct families of  filtration applied to specific infinite jet algebras, resulting in two inequalities among Hilbert series  (see Proposition \ref{filtra} and Proposition \ref{sec-ineq}). In the main part, Section 4 and Section 5, we show equivalences between the identities in (\ref{main}), (resp. (\ref{main4}))  and the identity derived from the quantum dilogarithm (resp. quiver representations). This equivalence is used to demonstrate classical freeness of $W(\Lambda_0)$  (see Corollary  \ref{main-cor}). Moving on to Sections 6 and 7, we employ the level one spinor representations for orthogonal series of affine Lie algebras \cite{feingold1985classical} to investigate the $C_{2}$-algebras of the principal subspace for $L_{1}(\so_5)$ and of $L_{1}(\so_8)$. Then we prove the following result:
\begin{thm} The principal subspace $W(\Lambda_0)$ of $L_1(\so_5)$ is classically free.
\end{thm}
As a corollary of this theorem, we prove a new $q$-series identity which can be viewed as a $B_2$ version of the pentagon identity for the quantum dilogarithm \footnote{The pentagon identity is usually associated to $A_2$ root system; see Section 4.1.} :
\begin{align*} \label{charge-id} &\sum_{\substack{r_1,r_2,r_3 \geq 0}} y_1^{r_1} y_2^{2r_3+r_2} \frac{q^{r_1^2+(r_2+r_3)^2+r_3^2-r_1(r_2+2 r_3)}}{(q)_{r_1}(q)_{r_2}(q)_{r_3}}= \sum_{\substack{n_1,n_2,n_3,n_4,n_{5} \geq 0}}\\  &
 y_1^{n_1+n_2+n_4}  y_2^{2n_1+2n_3+n_4+n_5}  \frac{q^{n_1^2+n_2^2+(n_3+n_5)^2+n_4^2+n_3^2+(2 n_3+n_5)n_1+n_4(n_1+n_2)+n_3 n_4}}{(q)_{n_1}(q)_{n_2}(q)_{n_3}(q)_{n_4}(q)_{n_5}}
 \nonumber  \end{align*}
 Subsequently, we propose a conjectured expression for the Hilbert series associated with the jet algebra of $R_{W(\Lambda_0)}$, where $W(\Lambda_0) \subset L_1(\so_8)$. Should this conjecture hold true, it would consequently establish the classical freedom of this particular principal subspace.
Concluding our study, in Section 8, we provide computational evidence that $W(\Lambda_0) \subset L_1(\frak{g}_2)$ (here $\frak{g}_2$ is the simple exceptional Lie algebra of type $G_2$) is also classically free.

{\bf Acknowledgments}:  The second named author would like to thank Robert Osburn for pointing out \cite{Andrews}  in connection to identities in Section 6. 
The second named author was partially supported by NSF grant DMS-2101844 and a Collaboration Grant for Mathematicians from the Simons Foundation 709563.


\section{preliminaries}
\subsection{Affine jet and arc algebras}
 We first define affine jet algebras.  As usual, let
$\mathbb{C}[x_1,x_2, \ldots, x_n]$ be the polynomial algebra in $x_i$, $1 \leq i \leq n$  and $f_{1},f_{2}, \ldots, f_{\ell}$ be some non-zero polynomials in this algebra. We will define the $m$-jet algebra of a finitely-generated commutative algebra: \begin{align*}R=\frac{ \mathbb{C}[x_1,x_2, \ldots, x_n]}{(  f_{1},f_{2}, \ldots, f_{\ell})}.\end{align*}

 Firstly, let us introduce new variables $x_{j,(-1-i)}$ for $i=0,\ldots,m$ and a derivation $T$ on \[\mathbb{C}[x_{j,(-1-i)}\;|\;0\leq i\leq m,\; 1\leq j\leq n], \] defined on generators \begin{equation*}
  T(x_{j,(-1-i)}) =
    \begin{cases}
      (-1-i)x_{j,(-i-2)} & \text{for $i\leq m-1$}\\
      0 & \text{for $i=m$}.\\
    \end{cases}
 \end{equation*}
Here we identify $x_{j}$ with $x_{j,(-1)}$. Set \begin{align*}R_{m}=\frac{\mathbb{C}[x_{j,(-1-i)}\;|\;0\leq i\leq m,\; 1\leq j\leq n]}{( T^{j}f_{i}|i=1,\ldots \ell ,\; m \geq j \geq 0) } ,  \end{align*}
 the algebra of $m$-jets of $R$.
The infinite jet algebra of $R$ is \begin{align*} J_{\infty}(R)&=\displaystyle\lim_{\underset{m}{\rightarrow}}R_{m}\\&=\frac{\mathbb{C}[x_{j,(-1-i)}\;|\;0\leq i,\; 1\leq j\leq n]}{( T^{j}f_{i}|i=1,\ldots \ell,\; j \geq 0) }.\end{align*}
The scheme $X_\infty=\displaystyle\lim_{\underset{m}{\leftarrow}} X_m$, where $X_m={\rm Spec}(R_m)$, is called the infinite jet scheme, or arc space, of $X={\rm Spec}(R)$.

We often omit "infinite" and call it jet algebra for brevity as we are not interested in the intermediate "finite" $m$-jet algebras. 
This construction is functorial and $J_\infty(\cdot)$ is right exact.
If $R$ is graded, then $J_\infty(R)$ is also graded, and we can define the Hilbert-(Poincar\'e) series of  $J_{\infty}(R)$ as: \[HS_{q}(J_{\infty}(R)):=\sum_{m \in  \mathbb{Z}} {\rm dim}(J_{\infty}(R)_{(m)}) q^m.\]

For later use, we denote the defining ideal of $J_\infty(R)$ by
\[ ( f_{1},\ldots,f_\ell)_{\partial}:=( T^{j}f_{i};i=1,\ldots,\ell, j\geq 0).\]

\subsection{Principal subspaces}

Let $V$ be a vector space.  A field is a formal series of the form $a(z)=\sum_{n\in \mathbb{Z}}a_{(n)}z^{-n-1}$, where $a_{(n)}\in $ End$(V)$ and for every $v\in V$ one has $$a_{(n)}v =0$$ for $n\gg 0$.
A vertex algebra is a vector space $V$, together with a distinguished vector called {\em vacuum}  $\mathbf{1} \in V,$ a linear derivation map $T \in {\rm End}(V)$, and the state-field correspondence map $$a\longmapsto Y(a,z)=\sum_{n\in \mathbb{Z}} a_{(n)}z^{-n-1},$$ satisfying the usual axioms of creation,  $T$-derivative properties, and Jacobi identity (for details see for instance \cite{lepowsky2012introduction}).
A vertex algebra $V$ is called commutative if $a_{(n)}=0$ for $n\geq 0$.

Let $\mathfrak{g}$ be a simple finite dimensional complex Lie algebra. Following \cite{lepowsky2012introduction}, we construct an affine untwisted Kac-Moody Lie algebra $\widehat{\mathfrak{g}}$ and its vacuum representation of level $k \neq -h^\vee$,
$$V_{k}(\mathfrak{g}) = U(\hat{\mathfrak{g}}) \otimes_{U(\mathfrak{g}[t]  ) \oplus \mathbb{C}c} \mathbb{C}_k, $$
which has a natural conformal vertex algebra structure. Here as usual $\mathfrak{g}[t] $ acts trivially on the $1$-dimensional space $\mathbb{C}_k$ and the central element $c$ acts as multiplication with $k$. We denote by $L_{k}(\mathfrak{g})$ the irreducible quotient of $V_{k}(\mathfrak{g})$, also a vertex algebra.

Next we define the {\em principal subspace}. We choose simple roots $\left\{\alpha_{1},\ldots,\alpha_{n}\right\}$ of $\frak{g}$ and denote the set of positive roots by  $\Delta^{+}$.  
Let $\mathfrak{n}_+:=\displaystyle \coprod_{\alpha\in \Delta^{+}} \mathbb{C} x_{\alpha}$, where $x_{\alpha}$ is the corresponding root vector of $\alpha$, and
$\widehat{\mathfrak{n}_{+}}=\mathfrak{n}_+ \otimes \mathbb{C}[t,t^{-1}]$ be its affinization.
For any affine vertex algebra $L_{k}({{\mathfrak{g}}})$, isomorphic to $L(k \Lambda_0)$ as a $\hat{\mathfrak{g}}$-module,
we define the (FS)-principal subspace of simple $\widehat{\mathfrak{g}}$-module  $L_{k}({{\mathfrak{g}}})$ as
$$W({k\Lambda_{0}}):= U(\widehat{\mathfrak{n}}_+) \cdot \mathbb{1},$$ where $\mathbb{1}$ is the vacuum vector. It is easy to see that this is a vertex algebra (without conformal vector). There is also a universal object
$$N({k\Lambda_{0}}):= U(\widehat{\mathfrak{n}}_+) \cdot \mathbb{1} \subset V_k(\mathfrak{g})$$
which is isomorphic (using PBW) to $U(t^{-1} \mathfrak{n}_+[t^{-1}] )$ as a graded vector space. From the construction we have a natural surjective map
$$N({k\Lambda_{0}}) \longrightarrow W({k\Lambda_{0}}),$$
with the kernel $I_{k \Lambda_0}$.  It is an open question to describe this ideal for every $k$. Clearly $I_{k \Lambda_0}=U(t^{-1} \mathbb{C}[t^{-1}] \mathfrak{n}_+){\bf 1} \cap I_{max}$, where
$I_{max}$ is the maximal ideal in $V_k(\mathfrak{g})$.

For $k=1$ and $\mathfrak{g}$ is of ADE type we have $W_L \cong W({\Lambda_{0}})$ where $W_{L} \subset V_L$, where $L$ is the root lattice, and $W_L$ is the principal space inside the lattice vertex algebra $V_L$ \cite{milas2012lattice}.
Principal subspaces were introduced in \cite{feigin1993quasi} and since then have been studied in many papers some by the authors; see \cite{butorac2014combinatorial,butorac2019principal,butorac2020note,calinescu2008intertwining,calinescu2010vertex,capparelli2006rogers,georgiev,kawasetsu,li2020some,LM2021,sadowski2015principal} and references therein.

Given a vertex algebra $V,$ we define its $C_{2}$-algebra as:\[R_{V}:=V/\langle a_{(-2)}b|a\in V, b\in V \rangle,\] where the multiplicative structure is given by \[\overline{a}\cdot \overline{b}=\overline{a_{(-1)}b}\] and the Lie bracket by (here $\overline{x}$ denotes the image of $x \in V$ in $R_V$):
 \[ \{ \overline{a} , \overline{b} \}=\overline{a_{(0)}b}.\]
 According to \cite{zhu1996modular}, these operations equip $R_{V}$ with  a Poisson algebra structure. Then $X_V:={\rm Spec}(R_V)$ is called the associated scheme of $V$.

\begin{dfn}


A commutative associative unital algebra $V$ is called a  vertex Poisson algebra if it is 
equipped with a linear operation,
$$V\rightarrow {\rm Hom}(V,z^{-1}V[z^{-1}]),\quad a\rightarrow Y_{-}(a,z)=\sum_{n\geq 0}a_{(n)}z^{-n-1},$$
such that
\begin{itemize}
    \item $(Ta)_{n}=-na_{(n-1)}$,
    \item $a_{(n)}b=\sum_{j\geq 0} (-1)^{n+j+1}\frac{1}{j!}T^{j}(b_{(n+j)}a),$
    \item $[a_{(m)},b_{(n)}]=\sum_{j\geq 0} \binom{m}{j}(a_{(j)}b)_{(m+n-j)},$
    \item  $a_{(n)}(b\cdot c)=(a_{(n)}b)\cdot c+b\cdot (a_{(n)}c),$
\end{itemize}
for $a,b,c\in V$ and $n,m \geq 0$.
\end{dfn}
Following  \cite{li2005abelianizing}, we can define a decreasing sequence of subspaces $\left\{F_{n}(V)\right\}$ of the
algebra $V$, where for $n\in \mathbb{Z}$, $F_{n}(V)$ is linearly spanned by the vectors $$u_{(-1-k_{1})}^{(1)}\ldots u_{(-1-k_{r})}^{(r)}\bf{1}$$
for $r\geq 1$, $u^{(1)},\ldots,u^{(r)}\in V,$ $k_{1},\ldots,k_{r}\geq 0$ with $k_{1}+\ldots+k_{r}\geq n.$ Then \[ V=F_{0}(V)\supset F_{1}(V)\supset\ldots\]such that  \begin{align*}
&u_{(n)}v\in F_{r+s-n-1}(V)\quad {\rm for} \quad u\in F_{r}(V),v\in F_{s}(V), r,s\in\mathbb{N},n\in \mathbb{Z},\\
&u_{(n)}v\in F_{r+s-n}(V)\quad {\rm for} \quad u\in F_{r}(V), v\in F_{s}(V),r,s,n\in \mathbb{N}. \end{align*}

\noindent The corresponding associated graded algebra 
${\rm gr}^{F}(V)=\coprod_{n\geq 0} F_{n}(V)/F_{n+1}(V)$ is a vertex Poisson algebra; sometimes we will omit the superscript $F$.

According to \cite{li2005abelianizing}, we know that $$F_{n}(V)=\left\{u_{(-1-i)}v|u\in V,i\geq 1,v\in F_{n-i}(V)\right\}.$$ In particular, $F_{0}(V)/F_{1}(V)=V/C_{2}(V)=R_{V}\subset gr^{F}(V)$. We can extend this embedding to a surjective map \[\psi:J_{\infty}(R_{V})\twoheadrightarrow gr^{F}(V).\]
Following \cite{arakawa2012remark}, $J_{\infty}(R_{V})$ has a unique vertex Poisson algebra structure such that

$$u_{(n)}v=\begin{cases} \left\{u,v\right\}, & \mbox{if } n=0 \\ 0, & \mbox{if } n>0 \end{cases} $$
for $u,v \in J_{\infty}(R)$.
The map $\psi$ is a vertex Poisson algebra epimorphism. 

We say that a vertex algebra is  {\em classically free} if  the map $\psi$ is an isomorphism.
\begin{ex} Not every vertex algebra is classically free. For instance, minimal Virasoro vertex algebras $V=L_{Vir}(c_{p,q},0)$, $2 \leq p<q$, $\gcd(p,q)=1$, are classically free if and only if 
$p=2$. This can 
be seen by looking at their characters \cite{FF,van2018chiral}.
\end{ex}

Returning to $L_k(\mathfrak{g})$ and its principal subspace $W(k \Lambda_0)$.
For these vertex algebras the filtration $F_n( \ \cdot \ ) $ is linearly spanned by
$$u_{(-1-k_{1})}^{(1)}\ldots u_{(-1-k_{r})}^{(r)}\bf{1},$$
$k_{1},\ldots,k_{r}\geq 0$ with $k_{1}+\ldots+k_{r}\geq n,$ where $u^{(i)} \in \frak{g}$ for $L_k(\mathfrak{g})$, and $u^{(i)} \in \frak{n}_+$ for $W(k \Lambda_0)$.
We clearly have
$R_{V_k(\mathfrak{g})}=\mathbb{C}[\mathfrak{g}]$, $R_{N(k \Lambda_0)}=\mathbb{C}[\mathfrak{n}_+]$, $R_{L_k(\mathfrak{g})}=\mathbb{C}[\mathfrak{g}]/ \overline{I_{max}}$, $R_{W(k \Lambda_0)}=\mathbb{C}[\mathfrak{n}_+]/\overline{I_{k \Lambda_0}}$
where $\overline{I_{max}}$ denotes the image of $I_{max}$ under the natural map $V \to R_V$, 
and ${I_{k \Lambda_0}} = U( \widehat{\mathfrak{n}_+}){\bf 1} \cap I_{max}$ \footnote{Unlike $I_{max}$, we in general do not know 
the generators of $I_{k \Lambda_0}$, $k \in \mathbb{N}$; this issue was discussed in several papers, e.g. \cite{calinescu2010vertex}.}. Taking projection gives
$$\overline{I_{k \Lambda_0}} \subset \mathbb{C}[\mathfrak{n}_+] \cap \overline{I_{max}}.$$
But in general, without any further assumption, we cannot prove the opposite inclusion.
For example, there is nothing that rules out relations of type $v=x_{\alpha_1}(-1)x_{\alpha_2}(-1)x_{\alpha_3}(-1)+x_{\alpha_1+\alpha_2+\alpha_3}(-2)h(-1) \in I_{max}$ where $h \in \frak{h}$, and $x_{\alpha_i} \in \frak{n}_+$ are some root vectors. This relation would imply $0 \neq \bar{v}=x_{\alpha_1}x_{\alpha_2} x_{\alpha_3} \in \overline{I}_{max} \cap  \mathbb{C}[\frak{n}_+] $ but $\overline{v}$ is not necessarily in $\overline{I}_{k \Lambda_0}$. Consequently, for general $k$,  we only have a map
$$R_{W(k \Lambda_0)} \rightarrow R_{L_k(\frak{g})},$$
but we additionally require $\overline{I_{k \Lambda_0}} = \mathbb{C}[\mathfrak{n}_+] \cap \overline{I_{max}}$ for the map to be an embedding.
We conclude this part with 
\begin{conj} If $L_k(\frak{g})$ is classically free, then $W(k \Lambda_0)$ is also classically free.
\end{conj}
The converse statement does not seem to hold.







\section{Principal subspace $W(\Lambda_0)$ of $\fsl_n$}

\subsection{Jet algebra $J_\infty(A)$} The structure of the principal subspace $W(\Lambda_0)$ of $\fsl_n$ was  already investigated by several authors \cite{feigin1993quasi,calinescu2008intertwining, calinescu2010vertex, georgiev, sadowski2015principal}.
According to \cite{feigin2011zhu} and \cite{li2020some} \footnote{The same result also follows from \cite{calinescu2010vertex} where the presentation of this principal subspace was given.}, the $C_{2}$-algebra $R_{W(\Lambda_0)}$ of the principal subspace of the affine vertex algebra $L_{1}(\fsl_{n})$ is isomorphic to \
\begin{equation} \label{initial}
A=\mathbb{C}[E_{i,j}|1 \leq i<j \leq n]/\underbrace{\langle \sum_{\sigma \in S_{2}} E_{i_1,j_{\sigma_{1}}}E_{i_2,j_{\sigma_{2}}}|j_{1}>i_2 \rangle}_{:=J}, 
\end{equation}
where we denote by $E_{i,j}$ the root vectors in $\mathfrak{n}_+$, and $1\leq i_{1}\leq i_{2}\leq n$, $1\leq j_{1}\leq j_{2}\leq n$ and $S_2$ is the symmetric group on two letters. Throughout this part we use the identification 
$$(E_{i,j})_{(-1)} =E_{i,j}$$
as elements of $J_\infty(R_{W(\Lambda_0)})$.
Inside the defining quadratic binomial ideal of $A$ we have the following three types of elements (it is helpful to use graphical
interpretation of these elements) :
\begin{itemize}
    \item[\textbf{Type \RomanNumeralCaps 1}]: $E_{i_{1},j_{1}}E_{i_{2},j_{2}}=-E_{i_{1},j_{2}}E_{i_{2},j_{1}},$ where $1\leq i_1 < j_1\leq n,\quad 1\leq i_2 < j_2\leq n,\quad 1\leq i_1 < i_2\leq n,\quad 1\leq j_1 < j_2\leq n,\quad j_{1}>i_2,\quad j_{1}>i_{2}+1.$

 \begin{tikzpicture}
\draw[gray, thick] (-12,-2) -- (-8,-2);
\draw[gray, thick] (-15,-1) -- (-10,-1);
\filldraw[black] (-15,-1) circle (2pt) node[anchor=south] {$i_{1}$};
\filldraw[black] (-10,-1) circle (2pt) node[anchor=south] {$j_{1}$};
\filldraw[black] (-12,-2) circle (2pt) node[anchor=south] {$i_{2}$};
\filldraw[black] (-8,-2) circle (2pt) node[anchor=south] {$j_{2}$};
\filldraw[black] (-11,-2) circle (2pt) node[anchor=south] {$i_{2}+1$};
\end{tikzpicture}
\vspace{2mm}

\item[\textbf{Type \RomanNumeralCaps 2}]:
$E_{i_{1},j_{1}}E_{i_{2},j_{2}}=0,$ where $1\leq i_1 < j_1\leq n,\quad 1\leq i_2 < j_2\leq n,\quad i_{1}=i_{2},\quad 1\leq j_{1}\leq j_{2}\leq n,$ or

$1\leq i_1 < j_1\leq n,\quad 1\leq i_2 < j_2\leq n,\quad 1\leq j_{1}=j_{2}\leq n,\quad 1\leq i_{1}< i_{2}\leq n.$

\begin{center}
 \begin{tikzpicture}
\draw[gray, thick] (-14,-2) -- (-10,-2);
\draw[gray, thick] (-14,-1) -- (-12,-1);
\filldraw[black] (-14,-1) circle (2pt) node[anchor=south] {$i_{1}$};
\filldraw[black] (-12,-1) circle (2pt) node[anchor=south] {$j_{1}$};
\filldraw[black] (-14,-2) circle (2pt) node[anchor=south] {$i_{2}$};
\filldraw[black] (-10,-2) circle (2pt) node[anchor=south] {$j_{2}$};
\end{tikzpicture}
\qquad
\begin{tikzpicture}
\draw[gray, thick] (-12,-2) -- (-10,-2);
\draw[gray, thick] (-14,-1) -- (-10,-1);
\filldraw[black] (-14,-1) circle (2pt) node[anchor=south] {$i_{1}$};
\filldraw[black] (-10,-1) circle (2pt) node[anchor=south] {$j_{1}$};
\filldraw[black] (-12,-2) circle (2pt) node[anchor=south] {$i_{2}$};
\filldraw[black] (-10,-2) circle (2pt) node[anchor=south] {$j_{2}$};
\end{tikzpicture}
\end{center}
\vspace{2mm}
\item[\textbf{Type \RomanNumeralCaps 3}]: $E_{i,i+1}E_{j,j'}=-E_{i,j'}E_{j,i+1},$ where $j<i<i+1<j'. $


\vspace{2mm}
\begin{tikzpicture}
\draw[gray, thick] (-15,-1) -- (-16,-1);
\draw[gray, thick] (-13,-2) -- (-18,-2);
\filldraw[black] (-15,-1) circle (2pt) node[anchor=south] {$i+1$};
\filldraw[black] (-16,-1) circle (2pt) node[anchor=south] {$i$};
\filldraw[black] (-13,-2) circle (2pt) node[anchor=south] {$j'$};
\filldraw[black] (-18,-2) circle (2pt) node[anchor=south] {$j$};
\end{tikzpicture}

\end{itemize}
\begin{rem}

 Inside the jet algebra $J_\infty(A)$, Type \RomanNumeralCaps 1 relations generate further relations coming from $z$-coefficients of  $$E^{+}_{i_{1},j_{1}}(z)E^{+}_{i_{2},j_{2}}(z)+E^{+}_{i_{1},j_{2}}(z)E^{+}_{i_{2},j_{1}}(z)=0,$$  where $E^{+}_{i,j}(z):=\sum_{n\in\mathbb{Z}_{<0}}(E_{i,j})_{(n)}z^{-n-1}$ denotes the holomorphic part of the field $E_{i,j}(z)$. For
 Type \RomanNumeralCaps 2 and \RomanNumeralCaps 3 relations, we use similar notation.
In order to simplify presentation, we often write $a^+(z)b^+(z)=0$ to denote a whole family of relations obtained by taking all $z$-coefficients in the expansion.

\end{rem}

\subsection{Jet algebra $J_\infty(B)$}

We let $$P=\mathbb{C}[E_{i,j}|1 \leq i<j \leq n].$$ Let us consider another commutative algebra
\begin{align} \label{terminal} B=\mathbb{C}[E_{i,j}|1 \leq i<j \leq n]/I, \end{align} 
where ideal $I$ is generated by all Type \RomanNumeralCaps 2 relations and the left-hand sides of all relations of Type \RomanNumeralCaps 1 and Type \RomanNumeralCaps 3.  This way $I$ is a monomial ideal. This algebra and its infinite jet algebra are easier to analyze. We will show that $$HS_{q}(J_{\infty}(A))\leq HS_{q}(J_{\infty}(B)),$$
where the inequality among $q$-series has the obvious meaning: all $q$-coefficients on the left-hand side are less than or equal to  the corresponding coefficients on the right-hand side.
Eventually we will show that additionally $HS_{q}(J_{\infty}(B))={\rm ch}[W(\Lambda_0)]$ which together with $HS_{q}(J_{\infty}(A)) \geq {\rm ch}[W(\Lambda_0)]$ implies the classical freeness of $W(\Lambda_0)$ (see Section 4).


Let $i$ and $j$ be such that $n-2\geq i \geq  j \geq 1$.
Following an idea of E. Feigin on PBW filtrations in \cite{feigin2009pbw}, for $n \geq 3$, we introduce an increasing filtration $\{ G^{i,j}_s \}$, $ s \geq 0$,
 on the  jet algebra  $G^{i,j}=J_{\infty}({{B}_{i.j}})$, where ${B}_{i.j}$ is a  certain quotient algebra $\mathbb{C}[E_{a,b}|1 \leq a<b \leq n]/\mathbb{I}_{i.j}$ by an ideal generated by quadratic binomials (of degree 2). Roughly speaking, our procedure starts with the initial algebra $A=P/J$ and consists of several steps such that in the terminal algebra $B=P/I$, ideal $I$ is monomial. This resulting algebra is easier to analyze. In particular, we can explicitly compute the Hilbert series of $J_\infty(B)$ using methods of vertex algebra \cite{li2020some}.


{\em Construction of $gr^{i.j}$.} Next we define filtrations and corresponding associated graded algebras. Fix a pair $(i,j)$ as above. Suppose that we already constructed algebra $\mathcal{A}=P/\mathbb{I}$, a quotient algebra of $P$, and hence $J_\infty(\mathcal{A})$. We define the filtration $\{ G^{i,j}_s \}_{s \geq 0}$ on $J_{\infty}({\mathcal{A}})$ by first defining a filtration on $J_{\infty}(P)$. Let $G^{i,j}_{0}(P)$  be diffferentially  generated by $$E_{i,i+2},E_{i-1,i+2},\cdots, E_{j,i+2},$$ 
that is 
\begin{align*}& G^{i,j}_{0}(P) :=\langle E_{m,n} : (m,n) \in \mathcal{E}_{i,j} \rangle_\partial \\ 
& = {\rm span}\left\{(E_{i,i+2})^{\ell_{1}}_{(k_{1})}(E_{i-1,i+2})^{\ell_{2}}_{(k_{2})}\cdots (E_{j,i+2})^{\ell_{i-j+1}}_{(k_{i-j+1})}|  \ell_{r} \in \mathbb{N}_{\geq 0}, k_{m} \in \mathbb{Z}_{<0}\right\},
\end{align*} 
where for convenience we let $$\mathcal{E}_{i,j}=\left\{(i,i+2), (i-1,i+2),\cdots, (j,i+2)\right\}.$$ 
Then we recursively  let ($s \geq 1$): \[G^{i,j}_{s}(P):={\rm span}\left\{(E_{u,u'})_{(i)}v|i\leq -1, ({u,u'})\notin \mathcal{E}_{i,j}, v\in G^{i,j}_{s-1}\right\}+G^{i,j}_{s-1}(P).\] 
This clearly defines a filtration on $J_{\infty}(P)$. To define filtration on $J_{\infty}(\mathcal{A})=J_\infty(P)/\mathbb{I}_\partial$ we use the standard construction where we let 
$G^{i,j}_s:=G^{i,j}_s(P)/( G^{i,j}_s(P) \cap \mathbb{I}_\partial)$ and thus $G^{i,j}_s/G^{i,j}_{s-1} \cong G^{i,j}_{s}(P)/{(G^{i,j}_{s-1}(P)+G_s^{i,j}(P) \cap \mathbb{I}_\partial})$.

Then the associated graded algebra of $J_{\infty}(\mathcal{A})$, for a fixed pair $(i,j)$,  
is defined as \[gr^{i.j}(J_{\infty}(\mathcal{A})):= { \bigoplus_{s \geq 0}} \frac{G^{i,j}_{s}(P)}{(G^{i,j}_{s-1}(P)+G^{i,j}_s(P) \cap \mathbb{I}_\partial)}.\]  
Observe that we have a natural homomorphism from $P$ to $gr^{i.j}(J_{\infty}(\mathcal{A}))$ sending $E_{i,j}$ to its coset. The corresponding kernel will be denoted by $\mathbb{I}_{i.j+1}$, if $i >j$, and $\mathbb{I}_{i+1,1}$ if $i=j$.
\color{black}


Since we are dealing a whole family of algebras and filtrations, next we explain every step in the construction including the initial and the terminal step.
\begin{itemize}
 \item[\textbf{Step $1.1$}:] At the initial step we let $B_{0.0}:=A=P/J$. Using the prescription above, we get an algebra homomorphism from $P$ to $gr^{1.1}(J_{\infty}(A))$, where we  send $E_{i,j}$ to its coset. We restrict the map to $P$
 and denote the kernel (of the restriction) by $\mathbb{I}_{1.1}$, and define a new algebra \[B_{1.1}:=\mathbb{C}[E_{i,j}|1 \leq i<j \leq n]/\mathbb{I}_{1.1}.\]
\item[\textbf{Step $2.1$}:] Define \[B_{2.1}:=\mathbb{C}[E_{i,j}|1 \leq i<j \leq n]/\mathbb{I}_{2.1},\] where $\mathbb{I}_{2.1}$ is the kernel of the algebra homomorphism from $P$ to $gr^{2,1}(J_{\infty}(B_{1.1}))$, where $B_{1.1}$ was constructed in Step 1.1.
    \item[\textbf{Step $2.2$}:] Define \[B_{2.2}:=\mathbb{C}[E_{i,j}|1 \leq i<j \leq n]/\mathbb{I}_{2.2},\] where $\mathbb{I}_{2.2}$ is the kernel of the map defined in the same manner as above from $P$ to $gr^{2.2}(J_{\infty}(B_{2.1}))$.
\end{itemize}

We proceed in this fashion so that the input $B$-algebra needed at Step $i.j$ (with $1<j \leq i $) is the one constructed in Step $i.j-1$ and the $B$-input algebra needed for Step $i.1$ is the one constructed at Step $i-1.i-1$. More explicitly:
\begin{itemize}

\item[\textbf{Step $i.1$}:] Define \[B_{i.1}:=\mathbb{C}[E_{i,j}|1 \leq i<j \leq n]/\mathbb{I}_{i.1}\] where $\mathbb{I}_{i.1}$ is the kernel of the map from $P$ to $gr^{i.1}(B_{i-1.i-1})$.

\item[\textbf{Step $i.j$}] ($1<j \leq i$): Define \[B_{i.j}:=\mathbb{C}[E_{i,j}|1 \leq i<j \leq n]/\mathbb{I}_{i.j}\]  where $\mathbb{I}_{i.j}$ is the kernel of the map from $P$ to $gr^{i.j}(B_{i.j-1})$.

\end{itemize}
This procedure terminates  at Step $n-2.n-2$, where we reached a desired algebra $B_{n-2.n-2}.$

The maps that we constructed extend to the corresponding arc algebras. In other words, we have:
\begin{claim}For every $i$ and $j$ as above, we have the surjective homomorphism: \begin{align}\begin{cases} \displaystyle J_{\infty}(B_{i.j}) \twoheadrightarrow gr^{i.j}(J_{\infty}(B_{i.j-1}))& \mbox{if } i \geq j>1, \\  J_{\infty}(B_{i.j}) \twoheadrightarrow gr^{i.j}(J_{\infty}(B_{i-1.i-1})) & \mbox{if } j=1. \\\end{cases} \end{align}   \end{claim}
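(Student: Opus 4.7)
The plan is to produce the desired surjection by invoking the universal property of the infinite jet functor, once we verify that the target algebra inherits a canonical differential commutative algebra structure. The three main steps are: (i) show that the derivation $T$ descends from $J_{\infty}(B_{i.j-1})$ to $gr^{i,j}(J_{\infty}(B_{i.j-1}))$; (ii) lift the natural embedding $B_{i.j} \hookrightarrow gr^{i,j}(J_{\infty}(B_{i.j-1}))$ to a differential algebra map out of $J_{\infty}(B_{i.j})$; and (iii) check surjectivity on generators. The edge case $j=1$ is handled identically, with $B_{i.j-1}$ replaced throughout by $B_{i-1.i-1}$.

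First I would verify that the filtration $\{G^{i,j}_{s}\}$ on $J_{\infty}(B_{i.j-1})$ is $T$-stable. Since $T$ acts on each generator by $T((E_{a,b})_{(-1-k)}) = (-1-k)(E_{a,b})_{(-2-k)}$, it sends every single generator to a scalar multiple of another generator with the same index pair $(a,b)$. In particular, $T$ preserves the property of having index pair in $\mathcal{E}_{i,j}$ or outside it, so the number of factors with $(a,b)\notin \mathcal{E}_{i,j}$ appearing in a monomial is unchanged. Combined with the Leibniz rule, this shows $T(G^{i,j}_{s}) \subseteq G^{i,j}_{s}$ for every $s\geq 0$, so $T$ descends to a derivation $\overline{T}$ on the associated graded algebra, endowing $gr^{i,j}(J_{\infty}(B_{i.j-1}))$ with the structure of a differential commutative algebra.

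Next, by the very definition of $\mathbb{I}_{i.j}$ as the kernel of the natural map $P = \mathbb{C}[E_{a,b}\mid 1\leq a<b\leq n] \to gr^{i,j}(J_{\infty}(B_{i.j-1}))$, the induced map on quotients $B_{i.j} \hookrightarrow gr^{i,j}(J_{\infty}(B_{i.j-1}))$ is injective. Applying the universal property of the infinite jet algebra, namely that any algebra homomorphism from a commutative algebra to a differential commutative algebra extends uniquely to a differential algebra homomorphism from its jet algebra, we obtain a unique lift $\Phi: J_{\infty}(P) \to gr^{i,j}(J_{\infty}(B_{i.j-1}))$. By construction $\Phi$ vanishes on $\mathbb{I}_{i.j}$, and because $\Phi$ intertwines $T$ with $\overline{T}$, it also vanishes on every $T^{k}f$ with $f\in \mathbb{I}_{i.j}$, $k\geq 0$, hence on the full differential ideal $(\mathbb{I}_{i.j})_{\partial}$. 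Therefore $\Phi$ factors through the quotient $J_{\infty}(B_{i.j}) = J_{\infty}(P)/(\mathbb{I}_{i.j})_{\partial}$.

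Finally, for surjectivity, observe that $gr^{i,j}(J_{\infty}(B_{i.j-1}))$ is generated as a commutative algebra by the classes of $(E_{a,b})_{(-1-k)}$ for all $1\leq a<b\leq n$ and $k\geq 0$, and these classes coincide, up to nonzero scalars coming from iterated $\overline{T}$, with the images of the canonical algebra generators $T^{k}E_{a,b}$ of $J_{\infty}(B_{i.j})$. The main potential obstacle is the bookkeeping in step (i), namely checking $T$-stability carefully at each stage of the inductively defined filtration; however, because $G^{i,j}_{s}$ is defined directly as a span of $T$-covariant monomials rather than as the solution set of a system of relations, this stability follows immediately from the action of $T$ on individual generators, and the remaining steps are then essentially formal.
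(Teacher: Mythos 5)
Your proof is correct and takes essentially the same route as the paper: both arguments produce a surjective differential algebra map $J_\infty(P)\twoheadrightarrow gr^{i,j}(\cdot)$ (where $P=\mathbb{C}[E_{a,b}]$) and then observe that its kernel is a differential ideal containing $\mathbb{I}_{i.j}$, hence containing $(\mathbb{I}_{i.j})_\partial$, so the map factors through $J_\infty(B_{i.j})$. Your version supplies two details the paper asserts implicitly --- the $T$-stability of the filtration $G^{i,j}_s$ (so that $T$ descends to the associated graded) and the explicit use of the adjunction defining $J_\infty$ --- but the underlying argument is the same.
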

\begin{proof}
We only prove the Claim at Step $1.1$. The proof at other steps is analogous using induction. 
It is clear from the construction that $gr^{1.1}(J_\infty(A))$ is generated by $(E_{i,j})_{(-k)}$, so there is a natural algebra epimorphism $\phi$ from $J_\infty(P)$ to $gr^{1.1}(J_{\infty}(A))$ (here $B_{0,0}=A$) and the restriction of this map to $P$ has the kernel $\mathbb{I}_{1.1}$, $B_{1.1}=P/\mathbb{I}_{1.1}$. 
So we only have to argue that $\phi((\mathbb{I}_{1.1})_{\partial})=0$, which implies that  the surjective map $\phi$ is actually defined on $J_\infty(B_{1.1})$. Let $v \in \mathbb{I}_{1.1}$. 
From the construction, we may assume that there exists $s\in \mathbb{N}$ such that $v   \in G^{1.1}_s$ and $v \in G^{1.1}_{s-1} + \mathbb{I}_\partial \cap G^{1.1}_{s}$. But, by the construction, $G^{1.1}_s$ and $\mathbb{I}_\delta$ are invariant under the derivation map $T$, so we have $\phi(T (v))=0$.
\end{proof}

{\bf Hilbert series.} Although $gr^{i.j}(J_\infty(P))$ has a new grading (with infinite dimensional graded subspaces) we will consider it 
as a graded algebra with respect to the standard grading inherited from $J_\infty(P)$, that is $deg((E_{i,j})_{(-k)})=k$. Since all $B$-ideals starting with $J$ are homogeneous 
then $gr^{i.j}(J_\infty(B_{i.j-1}))$ and $gr^{i.1}(J_\infty(B_{i-1.i-1}))$ are also homogeneous so we have a well-defined Hilbert series and relations
\begin{align}\begin{cases} \displaystyle HS_{q}(gr^{i.j}(J_{\infty}(B_{i.j-1})))=HS_{q}(J_{\infty}(B_{i.j-1}))& \mbox{if } j-1\geq 1,\\  HS_{q}(gr^{i.j}(J_{\infty}(B_{i-1.i-1})))=HS_{q}(J_{\infty}(B_{i-1.i-1})) & \mbox{if } j=1. \\\end{cases}.\end{align} 
Claim 3.2 gives a sequence of surjective homomorphisms from which we obtain a chain of inequalities among their Hilbert series:
\begin{align}\label{chain} HS_{q}(J_{\infty}(B_{n-2.n-2}))\geq \cdots \geq HS_{q}(J_{\infty}(B_{n-2.1}))\geq \cdots \geq HS_{q}(J_{\infty}(A)).\end{align}

We illustrate how everything in this part works on a simplest non-trivial example.

\begin{ex} For $\fsl_3$, we have $A=B$ so there is nothing to prove, so we consider the Lie algebra $\fsl_{4},$ that is $n=4$. Then we have \begin{align*} A=\mathbb{C}[E_{1,2},E_{1,3},E_{1,4},E_{2,3},E_{2,4},E_{3,4}]/J, \end{align*} where \begin{align*} J=\langle & E_{1,2}^{2},E_{1,3}^{2},E_{1,4}^{2},E_{2,3}^{2},E_{2,4}^{2},E_{3,4}^{2}, E_{1,2}E_{1,3}, E_{1,2}E_{1,4},E_{1,3}E_{1,4}, E_{2,3}E_{2,4},\\ & \color{black} E_{1,3}E_{2,4}+E_{1,4}E_{2,3} \color{black}, E_{1,3}E_{2,3},E_{1,4}E_{2,4},E_{1,4}E_{3,4},E_{2,4}E_{3,4}\rangle.
\end{align*}
In this case, the relevant algebras are $B_{1.1}$, $B_{2.1}$ and $B_{2.2}$ constructed in Step 1.1, Step 2.1, and Step 2.2, respectively.

\noindent \textbf{Step 1.1}   
    Here we write down the spaces $G^{1,1}_{0} \subset J_\infty(P)$ and $G^{1,1}_{1} \subset J_\infty(P)$ explicitly for convenience: \begin{align*}G^{1,1}_{0}=\langle E_{1,3} \rangle_\partial={\rm span}\left\{(E_{1,3})^{\ell}_{(k)}| \ell \in \mathbb{N} ,k\in \mathbb{Z}_{<0}\right\},\end{align*}
    \begin{align*}G^{1,1}_{1}={\rm span}\left\{ \right.&(E_{1,2})_{(k_{1})}(E_{1,3})^{\ell}_{(k_{2})},\,(E_{1,4})_{(k_{1})}(E_{1,3})^{\ell}_{(k_{2})},\,(E_{2,3})_{(k_{1})}(E_{1,3})^{\ell}_{(k_{2})},\,\\ &(E_{2,4})_{(k_{1})}(E_{1,3})^{\ell}_{(k_{2})},\,(E_{3,4})_{(k_{1})}(E_{1,3})^{\ell}_{(k_{2})},\ | \ell \in \mathbb{N} ,k_{1},k_{2}\in \mathbb{Z}_{<0}\left. \right\}+G^{1,1}_{0}.\end{align*}
We claim:
\begin{align*} 
& B_{1.1}=P/\mathbb{I}_{1.1} \ \ {\rm and} \ \   I \subset \mathbb{I}_{1.1} \\  
& I= \langle E_{1,2}^{2},E_{1,3}^{2},E_{1,4}^{2},E_{2,3}^{2},E_{2,4}^{2},E_{3,4}^{2}, E_{1,2}E_{1,3}, E_{1,2}E_{1,4},E_{1,3}E_{1,4}, E_{2,3}E_{2,4}, \color{black} E_{1,4}E_{2,3} \color{black}, \\
&  E_{1,3}E_{2,3},E_{1,4}E_{2,4},E_{1,4}E_{3,4},E_{2,4}E_{3,4}\rangle.
\end{align*} 
To see that, we first notice that  in $gr^{1.1}(J_{\infty}(A))$ all monomial relations for $J$ are present but also $E_{1,3}E_{2,4}=-E_{1,4}E_{2,3}$ yields a new relation. More precisely,  at Step $1.1$ we have $E_{1,3}E_{2,4}\subset G^{1,1}_{1}$ and $E_{1,4}E_{2,3}\subset G^{1,1}_{2}\setminus G^{1,1}_{1} .$ Thus, the relation $E_{1,3}E_{2,4}=-E_{1,4}E_{2,3}$ in $J$ gives us $E_{1,4}E_{2,3}=0$ in $gr^{1.1}(J_\infty(A))$. 
 Therefore we have a surjective map from $J_\infty(B_{1.1})$ to $gr^{1.1}J_\infty(A)$ and thus $HS_{q}(J_{\infty}(A))\leq HS_{q}( J_{\infty}(B)).$
  Observe that already at this stage we established a surjective map 
 $$J_\infty(B) \twoheadrightarrow J_\infty(A)$$
and thus $HS_{q}(J_{\infty}(A))\leq HS_{q}( J_{\infty}(B)).$
 \text{Step 2.1 and Step 2.2} do not bring any practical improvements. We again obtain $$J_\infty(B) \twoheadrightarrow J_\infty(B_{2.2}),$$
 with $I \mathbb \subset \mathbb{I}_{2.2}$. We  can actually prove \[ B_{2.2} \cong B_{2.1} \cong B_{1.1} \cong B, \]  but this stronger result is not needed here. 
 \end{ex}

What we observed in the previous example for $n=4$ holds for every $\fsl_n$.

\begin{prop}\label{filtra}
For every $n \geq 3$, we have \[HS_{q}(J_{\infty}(A))\leq HS_{q}( J_{\infty}(B)).\]
\end{prop}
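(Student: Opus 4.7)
The plan is to leverage the chain of inequalities (\ref{chain}), reducing the proposition to the identification $B_{n-2.n-2}\cong B$. Once this identification is established, (\ref{chain}) immediately yields
\[
HS_q(J_\infty(B))=HS_q(J_\infty(B_{n-2.n-2}))\geq HS_q(J_\infty(A)),
\]
which is exactly the desired inequality. Thus the entire content of the proof is in showing that after running through all the filtrations $G^{i,j}$ in the prescribed order, the final quotient algebra is the monomial-relation algebra $B$.

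To prove $B_{n-2.n-2}\cong B$, I would track stage by stage how the defining relations of $B_{i.j-1}$ evolve under $G^{i,j}$. The key observation is that a product $E_{a,b}E_{c,d}$ has filtration degree equal to the number of its factors lying outside the pivot set $\mathcal{E}_{i,j}=\{(s,i+2):j\leq s\leq i\}$. Consequently, a binomial relation $E_{i_1,j_1}E_{i_2,j_2}+E_{i_1,j_2}E_{i_2,j_1}=0$ degenerates in $gr^{i,j}$ to a monomial relation precisely when its two summands contain different numbers of pivots from $\mathcal{E}_{i,j}$. The central combinatorial claim to verify is that for each Type I and each Type III binomial of $A$, there is a first stage in the ordering of filtration steps at which exactly one of the two monomial summands meets $\mathcal{E}_{i,j}$, so that at this critical stage the binomial becomes the corresponding monomial generator of the ideal defining $B$. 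Since monomial relations are manifestly preserved under all subsequent associated-graded constructions, and since Type II relations of $A$ are already monomial and likewise survive, we would obtain $B_{n-2.n-2}\cong B$.

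The principal obstacle will be the combinatorial bookkeeping required for this claim: given the prescribed order in which the stages $(i,j)$ are traversed (fixing $i$ and decreasing $j$, then incrementing $i$), one must verify that every Type I and Type III binomial is broken at some stage in exactly the correct way, and that no spurious monomial relations are introduced along the way. Equivalently, one must show that the ideal of $B_{n-2.n-2}$ is generated by the same monomials that generate the ideal of $B$. This amounts to a careful case analysis, tracking for each binomial the column index $i+2$ that arises from the pivot factor on one side versus the other, and confirming that the side with the \emph{higher} filtration degree is precisely the one whose monomial does not appear as a generator in $I$. The $sl_4$ computation worked out in the preceding example serves as the template for the inductive verification in general rank.
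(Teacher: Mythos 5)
Your overall framework is exactly the paper's: invoke the chain of inequalities from equation (\ref{chain}) and then compare $B$ with $B_{n-2.n-2}$. However, you reduce the proposition to proving the \emph{isomorphism} $B_{n-2.n-2}\cong B$, and this over-commits. The paper observes that it suffices to prove the one-sided inclusion $I\subset \mathbb{I}_{n-2.n-2}$: since $B=\mathbb{C}[E_{i,j}]/I$ and $B_{n-2.n-2}=\mathbb{C}[E_{i,j}]/\mathbb{I}_{n-2.n-2}$ are quotients of the same polynomial ring, this inclusion already yields a surjection $B\twoheadrightarrow B_{n-2.n-2}$, hence (by right exactness of $J_\infty$) a surjection $J_\infty(B)\twoheadrightarrow J_\infty(B_{n-2.n-2})$, and therefore $HS_q(J_\infty(B))\geq HS_q(J_\infty(B_{n-2.n-2}))\geq HS_q(J_\infty(A))$.

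The gap in your proposal is the very thing you flag at the end: to get the isomorphism rather than the inclusion, you must rule out ``spurious'' monomial relations entering $\mathbb{I}_{i.j}$ at intermediate stages. That verification is genuinely delicate (the kernel $\mathbb{I}_{i.j}$ is defined implicitly, and one must control not only how the given Type I and Type III binomials degenerate, but also that nothing unexpected lands in the kernel of $P\to gr^{i,j}(\cdot)$), and you offer no mechanism for it beyond ``careful case analysis.'' The paper's proof never touches this issue: it only tracks, relation by relation, that each generator of $I$ (Type II trivially, Type I at step $(j_1-2).(i_1+1)$, Type III at step $(i-1).1$) lands in the appropriate $\mathbb{I}_{i.j}$, and hence survives into $\mathbb{I}_{n-2.n-2}$. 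Your filtration-degree heuristic for which side of a binomial degenerates is correct and matches the paper's bookkeeping, but the argument should be organized around the inclusion of ideals, not around the isomorphism of quotient algebras, to avoid the hard and unnecessary ``no spurious relations'' step.
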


\begin{proof} 



According to (\ref{chain}), we know that $$ HS_{q}( J_{\infty}(B_{n-2.n-2}))\geq HS_{q}(J_{\infty}(A)).$$ In order to prove the statement, it is sufficient to show that the defining ideal of $B$, i.e., $I$, belongs to $\mathbb{I}_{n-2.n-2}.$ Indeed, if it is true, we would have $$ HS_{q}( J_{\infty}(B))\geq HS_{q}( J_{\infty}(B_{n-2.n-2}))$$ since both $B$ and $B_{n-2.n-2}$ are quotient polynomial algebras with the same generators of the same degree.  

{\em Claim:} $I\subset \mathbb{I}_{n-2.n-2} $.

We first consider Type \RomanNumeralCaps 2 relations, coming from quadratic monomial relations. Suppose that $v=E_{i,j} E_{r,s} \in J$. These relations are present at every step of the construction because there is $k \leq 2$ such that $v \in G^{k}_{1.1}(P)$, $v \notin G^{k-1}_{1.1}(P)$ and $v \in G^{k}_{1.1}(P) \cap J$. The rest of the argument follows inductively and therefore $v \in \mathbb{I}_{n-2.n-2}$.

Next we consider Type \RomanNumeralCaps 1 relations in $A$, i.e.,
 $E_{i_{1},j_{1}}E_{i_{2},j_{2}}=-E_{i_{1},j_{2}}E_{i_{2},j_{1}},$ where $1\leq i_1 < j_1\leq n,\quad 1\leq i_2 < j_2\leq n,\quad 1\leq i_1 \leq i_2\leq n,\quad 1\leq j_1 \leq j_2\leq n,\quad  j_{1}>i_{2}+1.$ Following above procedure, we see that these Type \RomanNumeralCaps 1 relations belong to $\mathbb{I}_{i,j}$ until the step $(j_{1}-2).(i_{1}+1)$. Indeed,  at that  step,  $G^{j_{1}-2,i_{1}+1}_{0}$ is generated by $$E_{j_{1}-2,j_{1}}, E_{j_{1}-3,j_{1}},\cdots,E_{i_{1}+1,j_{1}}.$$ Note \[E_{i_{1},j_{1}}E_{i_{2},j_{2}}\subset G^{j_{1}-2,i_{1}+1}_{2}\setminus G^{j_{1}-2,i_{1}+1}_{1},\, \text{while}\,\,\, E_{i_{1},j_{2}}E_{i_{2},j_{1}}\subset G^{j_{1}-2,i_{1}+1}_{1}.\] Therefore, we get  $E_{i_{1},j_{1}}E_{i_{2},j_{2}}=0$ in $B_{j_{1}-2.i_{1}+1}$ at the step $(j_{1}-2).(i_{1}+1)$, and these relations also remain true in $B_{n-2.n-2}.$


For Type \RomanNumeralCaps 3 relations of $A$, i.e., $E_{i,i+1}E_{j,j'}=-E_{i,j'}E_{j,i+1}$ where $j<i<i+1<j',$ they give us  $$E_{i,i+1}E_{j,j'}=0$$ in $B_{i-1.1}$ at the step $(i-1).1$ since  we have $$E_{i,i+1}E_{j,j'}\subset G^{i-1,1}_{2}\setminus G^{i-1,1}_{1},\,\, \text{and}\,\,\,\, E_{i,j'}E_{j,i+1}\subset G^{i-1,1}_{1}.$$  Thus, these Type \RomanNumeralCaps 3 relations   in  $B_{n-2.n-2}$ become $E_{i,i+1}E_{j,j'}=0.$

From above arguments and the definition of algebra $B$, we see that all quotient relations of $B$ are true in $B_{n-2.n-2}$. Hence, we obtain the result.
\end{proof}



\subsection{Jet algebra $J_\infty(H)$}
If instead of taking the left-hand sides of Type \RomanNumeralCaps 3 relations for the generators of the quotient ideal of $B,$ we take the {\em right-hand sides} of Type \RomanNumeralCaps 3 relations, we get another commutative algebra \begin{align*}H:=\mathbb{C}[E_{i,j}|1 \leq i<j \leq n]/\mathcal{I}. \end{align*} 

Following a similar filtration procedure as in Section 3.2, we introduce another family of filtrations, $\mathcal{G}^{i,j}$ which are defined in the same manner as $G^{i,j}$ just with a different initial generating set for $\mathcal{G}^{i,j}_{0}$ (see below). We abuse the notation slightly here and still use $gr^{i.j}(J_{\infty}(\mathcal{H}))$ to denote the associated graded algebra of $J_{\infty}(\mathcal{H})$ but now with respect to the filtration $\mathcal{G}^{i,j}$.
The range for $i$ and $j$ in this case is: $ 1 \leq i \leq n-2$, $1 \leq j \leq i+1$ and 
$$\mathcal{E}'_{i,j}=\left\{(i+1,i+2), (i,i+2),\cdots, (j,i+2)\right\},$$
so that $$\mathcal{G}^{i,j}_0(P)=\langle E_{i,j} : (i,j) \in \mathcal{E}'_{i,j} \rangle_\partial.$$

\begin{ex}
For example for $n=4$, we have $ 1 \leq i \leq 2$, $1 \leq j \leq i+1$, and five steps in the construction: Steps 1.1 and 1.2, and Steps 2.1, 2.2, and 2.3. 
The corresponding generators for $\mathcal{G}_{0}^{i,j}(P)$ are given by 
\begin{align*}
 \textbf{Step $1.1$} \quad \mathcal{G}^{1,1}_{0}:  & \  E_{2,3}, \quad E_{1,3} \\
     \textbf{Step $1.2$} \quad  \mathcal{G}^{1,2}_{0}:  & E_{2,3}, \\
     \textbf{Step $2.1$} \quad \mathcal{G}^{2,1}_{0}:  & \ E_{3,4}, \quad E_{2,4}, \quad E_{1,4} \\
     \textbf{Step $2.2$} \quad  \mathcal{G}^{2,2}_{0}:  & \  E_{3,4}, \quad E_{2,4} \\
     \textbf{Step $2.3$} \quad  \mathcal{G}^{2,3}_{0}:  & \  E_{3,4} .\\
    \end{align*}
 \end{ex}   
    
In general, the construction starts as before with $A=P/J$ and the jet algebra $J_\infty(A)$. Then in Step 1.1 we construct a commutative algebra
 $$H_{1.1}:=\mathbb{C}[E_{i,j}|1 \leq i<j \leq n]/\mathbb{T}_{1.1}, $$ where  $\mathbb{T}_{1.1}$ is the kernel of the map from $P$ to  $gr^{1.1}(J_{\infty}(A))$, by sending $E_{i,j}$ to its coset. In Step 1.2 we define $$H_{1.2}:=\mathbb{C}[E_{i,j}|1 \leq i<j \leq n]/\mathbb{T}_{1.2}, $$  where $\mathbb{T}_{1.2}$ is the kernel of the map from $P$ to $gr^{1.2}(J_{\infty}(H_{1.1}))$.
Next we move to Step 2.1, etc. until we construct $gr^{n-2,n-1}(J_\infty(H_{n-2,n-2}))$ and $\mathbb{T}_{n-2,n-1}$. As before we can prove that 
$$\mathcal{I} \subset \mathbb{T}_{n-2,n-1}.$$

Then using exactly the same type of arguments as in Section 3.2,  we can prove the following result.
\begin{prop} \label{sec-ineq}
We have $$ HS_{q}(J_{\infty}(A))\leq HS_{q}(J_{\infty}(H)).$$
\end{prop}

Next we compute Hilbert series of the jet algebras $J_\infty(B)$ and $J_\infty(H)$.
\begin{prop}\label{jetchar}
The Hilbert series of $J_{\infty}(B)$ is  \begin{align}\label{iden0}\sum_{{\bf m} \in \mathbb{N}_{\geq 0}^{n(n-1)/2}} \frac{{\displaystyle q^{B({\bf m})}}}{\displaystyle\prod_{1\leq i<j\leq n }(q)_{n_{i,j}}},\end{align} and the Hilbert series of $J_{\infty}(H)$ is  \begin{align}\label{iden}\sum_{{\bf m} \in \mathbb{N}_{\geq 0}^{n(n-1)/2}} \frac{{\displaystyle q^{B'({\bf m})}}}{\displaystyle\prod_{1\leq i<j\leq n }(q)_{n_{i,j}}},\end{align}
where quadratic forms $B({\bf m})$ and $B'({\bf m})$ are as in Theorem \ref{thm1}  (not to be confused with the algebra $B$).
\end{prop}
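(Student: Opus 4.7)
My plan is as follows. Since $B$ and $H$ are both quotients of $P=\mathbb{C}[E_{i,j}\mid 1\le i<j\le n]$ by quadratic squarefree monomial ideals which contain every self-relation $E_{i,j}^2$, the proposition is an instance of a general formula for the Hilbert series of the infinite jet algebra of such a quotient. Concretely, if $J\subset P$ is the ideal generated by a collection of quadratic monomials $\{E_pE_q\}_{(p,q)\in S}$ (with $p,q$ ranging over pairs in $\{(i,j)\mid 1\le i<j\le n\}$ and $(p,p)\in S$ for each $p$), one expects the fermionic Hilbert series
\[
HS_q(J_\infty(P/J))=\sum_{\mathbf m}\frac{q^{Q(\mathbf m)}}{\prod_{i<j}(q)_{m_{i,j}}},\qquad Q(\mathbf m)=\sum_{(p,q)\in S}m_pm_q,
\]
in which a self-loop $(p,p)\in S$ contributes $m_p^2$ and each proper edge $(p,q)$ contributes $m_pm_q$. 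Matching $Q(\mathbf m)$ with $B(\mathbf m)$ for $B$ and with $B'(\mathbf m)$ for $H$ then yields the claimed identities.

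The combinatorial matching is a direct inspection. For $H$, the generators of $\mathcal I$ are parameterized exactly by the tuples $(i_1,j_1,i_2,j_2)$ indexing the sum $B'(\mathbf m)$ (including the diagonal case $(i_1,j_1)=(i_2,j_2)$, which yields the $E_{i,j}^2$ relations), and each generator contributes $m_{i_1,j_1}m_{i_2,j_2}$. For $B$, the four summands in $B(\mathbf m)$ correspond one-to-one to Type I relations, the two subcases of Type II (shared first index, which also absorbs the self-loops $E_{i,j}^2$, and shared second index), and Type III left-hand-side relations, with each generator in $\mathbb I$ contributing the corresponding term to the quadratic form.

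To establish the general Hilbert series formula, I would argue as follows. Each monomial generator $E_pE_q\in J$ yields in $J_\infty(P/J)$ the current identity $E_p^+(z)E_q^+(z)=0$, equivalently
\[
\sum_{\substack{a+b=N\\a,b\ge 1}}c_{a,b}\,(E_p)_{(-a)}(E_q)_{(-b)}=0
\]
for every $N\ge 2$, with specific nonzero coefficients $c_{a,b}$ coming from iteration of $T$. Using these relations, one reduces any element to a linear combination of normal-form monomials $\prod_{i<j}\prod_{\ell=1}^{m_{i,j}}(E_{i,j})_{(-a^{(\ell)}_{i,j})}$ subject to strict difference-two conditions on the modes of a single $E_{i,j}$ (forced by $E_{i,j}^2=0$) and interlacing difference conditions between modes of any two generators $E_p,E_q$ with $E_pE_q\in J$. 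A straightforward combinatorial count shows that the minimum realizable total weight in each $\mathbf m$-block equals $Q(\mathbf m)$, while the free choice of a partition above the minimum modes of each generator contributes $\prod_{i<j}1/(q)_{m_{i,j}}$.

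The main technical obstacle is linear independence of this normal-form spanning set. I would establish it by induction on the number of generators of $J$, starting from the single self-relation case $E^2=0$, whose jet algebra has Hilbert series equal to the Rogers--Ramanujan sum $\sum_m q^{m^2}/(q)_m$, and adding one quadratic monomial at a time while controlling the effect on the spanning set via the PBW-style filtration developed earlier in this section. Alternatively, linear independence can be verified using vertex-operator realizations and dual vectors in a suitable lattice vertex algebra, in the spirit of \cite{li2020some}. Either route identifies $HS_q(J_\infty(B))$ and $HS_q(J_\infty(H))$ with the right-hand sides of (\ref{iden0}) and (\ref{iden}), respectively.
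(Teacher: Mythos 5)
Your ``alternative'' route is in fact the paper's proof, not an alternative. The paper's argument is short: it exhibits an integral lattice $L$ of rank $n(n-1)/2$ whose Gram matrix has $2$ on the diagonal and off-diagonal entry $1$ exactly when the corresponding monomial $E_{i_1,j_1}E_{i_2,j_2}$ generates the defining ideal of $B$ (resp.\ $H$); by Theorem~5.13 of \cite{li2020some} this gives $J_\infty(B)\cong gr^F(W_L)$, and the character of $W_L$ from \cite{milas2012lattice} is precisely the fermionic sum with quadratic form $\tfrac12\mathbf m^\top G\mathbf m=Q(\mathbf m)$. The ``general formula'' you postulate for a quadratic monomial ideal with all $E_{i,j}^2$ is exactly this specialization, and your combinatorial matching of the four summands of $B(\mathbf m)$ with Type~I, the two Type~II subcases, and the Type~III left-hand-sides (and of $B'$ with the generators of $\mathcal I$) is correct and agrees with the Gram matrix the paper writes down. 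A small terminological slip: these monomial ideals are not squarefree, since they contain the squares $E_{i,j}^2$.

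Your primary route, however, has a genuine gap. Proving linear independence of the normal-form spanning set ``by induction on the number of generators of $J$, adding one quadratic monomial at a time while controlling the effect on the spanning set via the PBW-style filtration developed earlier in this section'' does not work as stated. Adding a single generator $E_pE_q$ changes $(J)_\partial$ in a way that is not transparently controlled, and showing that the Hilbert series drops by exactly the amount predicted by inserting $m_pm_q$ into $Q$ is the very statement to be proved; there is no obvious short exact sequence or filtration that reduces one case to the previous one. Moreover, the filtrations of Section~3 are designed to replace \emph{binomial} quotient relations by \emph{monomial} ones inside an associated graded object, not to track the effect of enlarging an already-monomial ideal, so invoking them here is a mismatch. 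Even the claim that the minimum weight realizable in each $\mathbf m$-block equals $Q(\mathbf m)$ requires a nontrivial argument because the boundary and difference conditions coming from distinct relations interact. The linear-independence input really has to come from an external realization---dual vectors, simple-current maps, or intertwiners in a lattice vertex algebra---which is what the cited theorem supplies and which you only mention as a fallback.
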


\begin{proof}
First, we consider an integral lattice $L$ of rank $\frac{n(n-1)}{2}$ with a basis:  \[\alpha_{1,2},\ldots,\alpha_{1,n},\ldots,\alpha_{n-1,n}.\]  The symmetric bilinear form of this lattice is defined as:
$(\alpha_{i_{1},j_{1}},\alpha_{i_{2},j_{2}})=1$ when \begin{itemize}
    \item $1\leq i_1 < j_1\leq n,\,1\leq i_2 < j_2\leq n,\,1\leq i_1 < i_2\leq n,\,1\leq j_1 < j_2\leq n ,\,j_{1}>i_{2}+1,$
    \item $1\leq i_1 < j_1\leq n,\,1\leq i_2 < j_2\leq n,\,1\leq i_{1}=i_{2}\leq n ,\, 1\leq j_{1}< j_{2}\leq n,$
    \item $1\leq i_1 < j_1\leq n,\,1\leq i_2 < j_2\leq n,\,1\leq j_{1}=j_{2}\leq n ,\, 1\leq i_1<i_2\leq n,$
    \item $i_{1}+1=j_{1}, \,1\leq  i_{2}\leq i_{1}<i_{1}+1<j_{2}\leq n,$
\end{itemize}
 $(\alpha_{i,j},\alpha_{i,j})=2$ for all $\alpha_{i,j}$, and $(\alpha_{i_{1},j_{1}},\alpha_{i_{2},j_{2}})=0$ otherwise. According to \cite[Theorem 5.13]{li2020some}, we have $J_{\infty}(B)\cong gr^{F}(W_{L}).$ It is known that the character of  $W_{L} \subset V_L$ is given by ($\ref{iden0}$) \cite{milas2012lattice,li2020some,kawasetsu}. Thus, we proved the first identity.
 The second identity for $HS_q(J_{\infty}(H))$ follows along the same lines.
\end{proof}

Equipped with these explicit formulas, we reduced the problem of classical freeness of $W(\Lambda_0)$ to a statement about characters and Hilbert series.





\section{Quantum dilogarithm}




The quantum dilogarithm is an important infinite series/product defined as $$\phi(x):=\prod_{i \geq 0} (1-q^i x).$$
Using the binomial $q$-series identity we also have
$$\phi(x)=\sum_{n \geq 0} (-1)^n \frac{q^{\frac12 n(n-1)} x^n}{(q)_n}.$$
and therefore 
\begin{equation} \label{Euler}
\phi(-q^{\frac{j}{2}}x)=\sum_{n \geq 0} \frac{q^{\frac12 n^2+\frac{j-1}{2}n} x^n}{(q)_n}.
\end{equation}

Let $x$ and $y$ be non-commutative variables such that
$$xy=q yx.$$ Then the quantum dilogarithm satisfies an important
pentagon identity of Faddeev and Kashaev \cite{faddeev1994quantum},
\begin{equation} \label{pent}
\phi(y) \phi(x)=\phi(x) \phi(-yx) \phi(y).
\end{equation}
This identity is widely used in the derivation of wall-crossing invariants.
\subsection{Warm up: Proof of (\ref{sl3}) using quantum dilogarithm}

We first observe that the two sides are equal if and only if ${\rm Coeff}_{x^m y^n}$ agree.
Comparing coefficients on both sides leads to
$$\frac{q^{m^2+n^2-mn}}{(q)_m (q)_n}=\sum_{n_1+n_2=m \atop n_2+n_3=n} \frac{q^{n_1^2+n_2^2+n_3^2+n_1 n_2+n_2 n_3}}{(q)_{n_1} (q)_{n_2} (q)_{n_3}},$$
where, after letting $n_1=m-n_2$ and $n_3=n-n_2$, the right hand-side can be rewritten as
\begin{equation} \label{sl3-coeff}
\sum_{n_2 \geq 0} \frac{q^{(m-n_2)^2+(n-n_2)^2+n_2^2+(m-n_2)n_2+(n-n_2)n_2}}{(q)_{m-n_2}(q)_{n-n_2}(q)_{n_2}}.
\end{equation}
After simplifying exponents on both sides we end up with an equivalent identity
\begin{equation} \label{sl3-alt}
\frac{1}{(q)_m(q)_n}=\sum_{n_2 \geq 0} \frac{q^{(n-n_2)(m-n_2)}} {(q)_{m-n_2}(q)_{n-n_2}(q)_{n_2}}.
\end{equation}
This famous identity appeared in a variety of situation and there are several different proofs in the literature
\cite{zagier}, \cite{lee} (credited to Zwegers), \cite{faddeev1994quantum}, etc.

Using quantum pentagon identity (\ref{pent1}) in a slightly different form
\begin{equation} \label{pent1}
\phi(-q^{1/2} y) \phi(-q^{1/2} x)=\phi(-q^{1/2} x) \phi(- q yx) \phi(-q^{1/2} y)
\end{equation}
together with Euler's expansion we get
$$\sum_{m,n \geq 0} \frac{q^{\frac12 m^2+\frac12 n^2} y^m x^n}{(q)_m (q)_n}=\sum_{n_1,n_2,n_3 \geq 0} \frac{q^{\frac12 n_1^2+\frac12 n_2(n_2+1)+\frac12 n_3^2} x^{n_1} (yx)^{n_2} y^{n_3}}{(q)_{n_1} (q)_{n_2} (q)_{n_3}}.$$
Using easy-to-verify identities
$$y^m x^n = x^n y^m q^{-mn}, (yx)^{n_2}=x^{n_2} y^{n_2} q^{-\frac12 n_2(n_2+1)}$$
we can write 
\begin{equation} \label{qd1}
\sum_{m,n \geq 0} \frac{q^{\frac12 m^2+\frac12 n^2-mn} x^n y^m }{(q)_m (q)_n}=\sum_{n_1,n_2,n_3 \geq 0} \frac{q^{\frac12 n_1^2+\frac12 n_3^2} x^{n_1+n_2}  y^{n_2+n_3}}{(q)_{n_1} (q)_{n_2} (q)_{n_3}}.
\end{equation}
Extracting the term next to $x^n y^m$ now gives (after letting $n=n_1+n_2$ and $m=n_2+n_3$)
$$\frac{q^{\frac12 m^2+\frac12 n^2-mn}  }{(q)_m (q)_n}=\sum_{n_2 \geq 0} \frac{q^{\frac12 (n-n_2)^2+\frac12 (m-n_2)^2 }}{(q)_{n-n_2} (q)_{m-n_2} (q)_{n_2}}$$
which is clearly equivalent to identity (\ref{sl3-alt}).
\qed

In summary, the pentagonal identity for the quantum dilogarithm (\ref{qd1}) is equivalent (according to our definition) to the character 
identity for the level one principal subspace of $\fsl_{3}$ (\ref{sl3}).

\subsection{General case}


Let $x_i$, $1 \leq i \leq n-1$, be non-commutative variables. Assume that
\begin{equation}\label{commu1}
x_i x_{i+1}=q x_{i+1}x_i,
\end{equation} and other pairs commute. Then

\begin{align} \phi \left(-\frac{q}{2}x_{n-1} \right) \cdots \phi \left(-\frac{q}{2}x_1\right)=\sum_{k_1,\cdots,k_{n-1} \geq 0} \frac{q^{\frac{k_1^2}{2}+ \cdots + \frac{k_{n-1}^2}{2}-k_1 k_2 -\cdots-k_{n-2} k_{n-1}} x_1^{k_1} \cdots x_{n-1}^{k_{n-1}}}{(q)_{k_1} \cdots (q)_{k_{n-1}}}. \end{align}
By definition we have two useful formulas:
\begin{align}\begin{split}\phi(-q^{\frac{j-i}{2}}x_{j-1}x_{j-2}\cdots x_{i})&=\displaystyle \sum_{m\geq 0}(-1)^{m}\frac{q^{\frac{m(m-1)}{2}}(-q^{\frac{j-i}{2}}x_{j-1}\cdots x_{i})^{m}}{(q)_{m}} \\&= \displaystyle \sum_{m\geq 0}\frac{q^{\frac{m^{2}}{2}}q^{\frac{m(j-i)}{2}-\frac{m}{2}}(x_{j-1}\cdots x_{i})^{m}}{(q)_{m}},\end{split}\end{align}

\begin{align}(x_{j-1}\cdots x_{i})^{m}=q^{-(j-i-1)\frac{m(m+1)}{2}}x_{i}^{m}x_{i+1}^{m}\cdots x_{j-1}^{m}.\end{align}

These two formulas combined together give us \begin{align}\label{formula1}\phi(-q^{{\frac{j-i}{2}}}x_{j-1}x_{j-2}\cdots x_{i})=\displaystyle \sum_{m\geq 0}\frac{q^{\frac{(2-(j-i))m^{2}}{2}}x_{i}^{m}x_{i+1}^{m}\cdots x_{j-1}^{m}}{(q)_{m}}.\end{align}

Now for $\phi(-q^{\frac{1}{2}}x_{n-1}) \cdots \phi(-q^{\frac{1}{2}}x_1)$ and formula (\ref{pent1}) we commute factors such that $\phi(-q^{\frac{1}{2}}x_i)$ is in front of $\phi(-q^{\frac{1}{2}}x_{i+1}).$ Therefore, we get the quantum dilogarithmic identity (in this particular order!): \begin{align}\label{qdi} \begin{split} &\phi(-q^{\frac{1}{2}}x_{n-1}) \cdots \phi(-q^{\frac{1}{2}}x_1)\\  =&\phi(-q^{\frac{1}{2}} x_1)\\ & \cdot  \phi(-q x_2x_{1})\phi(-q^{\frac{1}{2}} x_2)  \\& \mathrel{\makebox[\widthof{=}]{\vdots}} \\ & \cdot \phi(-q^{\frac{n-2}{2}}x_{n-2}\cdots x_1)\phi(-q^{\frac{n-3}{2}}x_{n-2}\cdots x_2)\cdots \phi(-q^{\frac{1}{2}}x_{n-2})\\ & \cdot \phi(-q^{\frac{n-1}{2}}x_{n-1}\cdots x_1)\phi(-q^{\frac{n-2}{2}}x_{n-1}\cdots x_2)\cdots \phi(-qx_{n-1} x_{n-2}) \phi(-q^{\frac{1}{2}}x_{n-1}).\end{split} \end{align}
We expand for each $j>i$:
\begin{align}\phi(-q^{\frac{j-i}{2}}x_{j-1}x_{j-1}\cdots x_{i})=\displaystyle \sum_{m_{i,j}\geq 0} (-1)^{m_{i,j}}\frac{q^{\frac{m_{i,j}(m_{i,j}-1)}{2}}(-q^{\frac{j-i}{2}}x_{j-1}x_{j-2}\cdots x_{i})^{m_{i,j}}}{(q)_{m_{i,j}}}. \end{align} 
This allows us to rewrite dilogarithmic identity (\ref{qdi}) as an identity in (\ref{generic2}):


\begin{align}\label{qdi1} \sum_{{\bf m} \in \mathbb{N}_{\geq 0}^{n(n-1)/2}} \frac{ q^{C({\bf m})}\cdot F} {\displaystyle\prod_{1\leq i<j\leq n }(q)_{m_{i,j}}}=\sum_{k_1,\cdots,k_{n-1} \geq 0} \frac{q^{\frac{k_1^2}{2}+ \cdots + \frac{k_{n-1}^2}{2}-k_1 k_2 -\cdots-k_{n-2} k_{n-1}} x_1^{k_1} \cdots x_{n-1}^{k_{n-1}}}{(q)_{k_1} \cdots (q)_{k_{n-1}}},\end{align} where \begin{align*} C({\bf m}) = \sum_{\substack{m_{i,j}\geq 0\\1\leq i<j\leq n }}\frac{(2-(j-i))m_{i,j}^{2}}{2}\end{align*} and \begin{align*}
    F=(x_{1}^{m_{1,2}}) (x_{1}^{m_{1,3}}x_{2}^{m_{1,3}+m_{2,3}})\cdots(x_{1}^{m_{1,n}}\cdots x_{n-1}^{m_{1,n}}x_{2}^{m_{2,n}}\cdots x_{n-1}^{m_{2,n}}\cdots x_{n-1}^{m_{n-1,n}}).
\end{align*}  An application of (\ref{commu1}) to $F$, we get $ F=q^{E({\bf m})}x_{1}^{\lambda_{1}}\cdots x_{n}^{\lambda_{n}},$  where $E({\bf m})$ is some quadratic form and \[\lambda_i=\displaystyle \sum _{ 1 \leq s\leq i \atop i < \ell \leq n}m_{s,\ell}.\] For identities (\ref{main}) and (\ref{qdi1}), we let


$$m_{i,i+1}:=k_{i}-\displaystyle \sum _{ 1 \leq s< i \atop i+1 < \ell \leq n}m_{s,\ell}-\sum _{ s=i \atop i+1 < \ell \leq n}m_{s,\ell}-\sum _{ 1 \leq s< i \atop  \ell= i+1  }m_{s,\ell}.$$


\begin{thm}\label{equi}
The identity (\ref{main}) and the quantum dilogarithm identity (\ref{qdi}) are equivalent.
\end{thm}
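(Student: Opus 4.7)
The plan is to realize both (\ref{main}) and (\ref{qdi}) as concrete instances of the canonical form (\ref{generic})/(\ref{generic2}) and then verify that they satisfy the equivalence criterion from the introduction. Both identities share the same index set $\bf m$ and the same ``matrix'' $U$ encoding $\lambda_i = \sum_{s\leq i < \ell} m_{s,\ell}$; only the quadratic forms differ. So the task reduces to computing those quadratic forms explicitly and checking one identity between them.

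First, I would expand the RHS of (\ref{qdi}), i.e.\ $\phi(-q^{1/2}x_{n-1})\cdots\phi(-q^{1/2}x_1)$, using Euler's identity (\ref{Euler}) and the commutation (\ref{commu1}) to move all $x_i$'s past each other, producing
$$\sum_{\mathbf{k}\in\mathbb{N}^{n-1}} \frac{q^{\frac12\sum k_i^2-\sum k_i k_{i+1}}\,x_1^{k_1}\cdots x_{n-1}^{k_{n-1}}}{(q)_{k_1}\cdots (q)_{k_{n-1}}}.$$
This is the RHS of (\ref{qdi1}) with quadratic form $\tfrac12 {\bf k}^\top \tilde A{\bf k}=\tfrac12\sum k_i^2-\sum k_ik_{i+1}$, which differs from $\tfrac12 {\bf k}^\top A{\bf k}=\sum k_i^2-\sum k_ik_{i+1}$ in (\ref{main}) by exactly $\tfrac12\sum k_i^2$.

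Next, I would expand the LHS of (\ref{qdi}) factor by factor using (\ref{formula1}). Each factor $\phi(-q^{(j-i)/2}x_{j-1}\cdots x_i)$ produces the contribution $\frac{q^{(2-(j-i))m_{i,j}^2/2}}{(q)_{m_{i,j}}}\,x_i^{m_{i,j}}\cdots x_{j-1}^{m_{i,j}}$, which together yield the diagonal quadratic form $C({\bf m})$ and the monomial $F$ written in the paper. The central computation is then to normal-order $F$ into $q^{E({\bf m})}x_1^{\lambda_1}\cdots x_{n-1}^{\lambda_{n-1}}$: each elementary swap $x_{i+1}x_i\to q^{-1}x_ix_{i+1}$ contributes $-1$ to the exponent, so $E({\bf m})$ is an explicit (negative-definite on the off-diagonal) quadratic form whose cross terms are indexed by the pairs $(m_{i_1,j_1},m_{i_2,j_2})$ whose $x$-supports overlap.

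Finally, I would apply the equivalence criterion of the introduction using the substitution
$$m_{i,i+1}=k_i-\sum_{s<i,\, i+1<\ell\leq n}m_{s,\ell}-\sum_{s=i,\, i+1<\ell\leq n}m_{s,\ell}-\sum_{s<i,\, \ell=i+1}m_{s,\ell},$$
which is the explicit solution of $U{\bf m}={\bf k}$ (i.e.\ $\lambda_i=k_i$). The required equality
$$B({\bf m})-\tfrac12{\bf k}^\top A{\bf k}=\bigl(C({\bf m})+E({\bf m})\bigr)-\tfrac12{\bf k}^\top\tilde A{\bf k}$$
simplifies to $B({\bf m})-C({\bf m})-E({\bf m})=\tfrac12\sum_i\lambda_i^2$ after substitution, and this identity of quadratic forms in the free variables $\{m_{i,j}:j>i+1\}$ is what I would verify term by term. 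The main obstacle is the bookkeeping in Step 3: $F$ is organized by ``rows'' indexed by $k=j-1$ with sub-factors indexed by $i$, and every pair of sub-factors contributes according to how far apart their $x$-supports sit. I expect the four-sum decomposition of $B({\bf m})$ (the cases $j_1>i_2+1$, $i_1=i_2$, $j_1=j_2$, and $j<i<i+1<j'$) to correspond exactly to the four types of commutations that arise during the normal-ordering, once one also absorbs the diagonal correction $\tfrac12\sum\lambda_i^2$ coming from the mismatch between $A$ and $\tilde A$. This matching, done case by case, completes the equivalence.
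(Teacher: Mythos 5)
Your reduction is set up the same way as the paper's: you rewrite both sides in the canonical form (\ref{generic2}), observe that the matrix $U$ and the index set $\mathbf{m}$ are shared, and reduce the equivalence to a polynomial identity between quadratic forms. Your display $B(\mathbf{m})-C(\mathbf{m})-E(\mathbf{m})=\tfrac12\sum_i\lambda_i^2$ is a correct rearrangement of the paper's equation (\ref{coef}), so the algebraic target is right.

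Where you diverge is the verification strategy, and this is where your proposal is incomplete. The paper proceeds by induction on the rank: it assumes the quadratic-form identity holds for $A_{n-1}$ (equivalently, after setting $m_{i,n+1}=0$), and then reduces the inductive step to matching coefficients of only those monomials that involve one of the ``new'' variables $m_{i,n+1}$. Those terms are classified into six types and each type's coefficient is computed explicitly on both sides. You instead propose a direct, non-inductive term-by-term verification, predicting that the four sums defining $B(\mathbf{m})$ will correspond to ``four types of commutations'' once the diagonal correction $\tfrac12\sum\lambda_i^2$ is absorbed. That prediction is optimistic: the commutation exponent $E(\mathbf{m})$ and the expansion of $\tfrac12\sum\lambda_i^2$ together produce cross-terms indexed by every pair of $m$-variables with overlapping $x$-support, which does not line up cleanly with the four case conditions of $B$; several of the resulting cases overlap or subdivide, which is precisely what the paper's six-type classification of the $m_{i,n+1}$-terms is designed to organize. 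More importantly, you stop at ``I expect'' and ``I would verify'' --- the matching is asserted, not carried out. Since the entire content of the theorem is exactly this matching, the proposal sketches the correct framework but leaves the proof undone. Either supply the explicit case-by-case coefficient check, or adopt the paper's induction on $n$, which confines the computation at each step to the manageable list of monomials containing some $m_{i,n+1}$.
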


\begin{proof}

We will prove the statement by induction. We already proved that it holds for $A_{2}$ in the previous section (base case). Assume that quantum dilogarithmic identity (\ref{qdi}) and identity (\ref{main}) are equivalent for $A_{n-1}$.

Now we prove the equivalence for $A_{n}$. In order to prove that, it suffices to show
(with $m_{i,i+1}$ as above) that  \begin{equation} \label{coef} C({\bf m})+E({\bf m})-(\frac{k_1^2}{2}+ \cdots + \frac{k_n^2}{2}-k_1 k_2 -..-k_{n-1} k_n)=B({\bf m})-\frac12{\mathbf{k}A\mathbf{k}^\top}.\end{equation} Letting $m_{i,n+1}=0$ where $1\leq i \leq n,$ in the above equation gives our induction hypothesis. Therefore we need to show that both sides of (\ref{coef}) have the same terms which involve $m_{i,n+1}$ ($1\leq i \leq n$). After expanding both sides of (\ref{coef}), it is easy to see the terms of the form $k_{i}k_{j}$ $(j> i+1)$ are absent on both sides, and the coefficients of the terms of the form $k_{i}k_{i+1}$ equal $1$ on both sides (so they cancel out).  
Similarly, terms of the form $k_{i}^{2}$ also cancel out.


We are left to analyze $6$ possible types of terms involving $m_{i,n+1}$:
\begin{itemize}
    \item[{\bf Type \RomanNumeralCaps 1:}] $m_{i_{2},j_{2}+1}m_{i_{1},n+1},$ where $j_{2}+1\leq i_{1}-1$.

\begin{tikzpicture}
\draw[gray, thick] (-15,-2) -- (-12,-2);
\draw[gray, thick] (-10,-1) -- (-7,-1);
\draw[gray, thick,dashed] (-10,-1) -- (-11,-1);
\filldraw[black] (-10,-1) circle (2pt) node[anchor=south] {$i_{1}$};
\filldraw[black] (-7,-1) circle (2pt) node[anchor=south] {$n+1$};
\filldraw[black] (-15,-2) circle (2pt) node[anchor=south] {$i_{2}$};
\filldraw[black] (-12,-2) circle (2pt) node[anchor=south] {$j_{2}+1$};
\filldraw[black] (-11,-1) circle (2pt) node[anchor=south] {$i_{1}-1$};
\end{tikzpicture}

\item[{\bf Type \RomanNumeralCaps 2:}] $m_{i_{2},j_{2}+1}m_{i_{1},n+1},$ where $i_{2}<i_{1}\leq j_{2}+1<n+1$.

\begin{tikzpicture}
\draw[gray, thick] (-15,-2) -- (-10,-2);
\draw[gray, thick] (-12,-1) -- (-7,-1);
\draw[gray, thick,dashed] (-10,-1) -- (-11,-1);
\filldraw[black] (-12,-1) circle (2pt) node[anchor=south] {$i_{1}$};
\filldraw[black] (-7,-1) circle (2pt) node[anchor=south] {$n+1$};
\filldraw[black] (-15,-2) circle (2pt) node[anchor=south] {$i_{2}$};
\filldraw[black] (-10,-2) circle (2pt) node[anchor=south] {$j_{2}+1$};

\end{tikzpicture}

\item[{\bf Type \RomanNumeralCaps 3:}] $m_{i_{2},j_{2}+1}m_{i_{1},n+1},$ where $i_{1}\leq i_{2}<j_{2}+1<n+1$, or $i_{1}< i_{2}<j_{2}+1\leq n+1$.

\begin{tikzpicture}
\draw[gray, thick] (-12,-2) -- (-9,-2);
\draw[gray, thick] (-14,-1) -- (-7,-1);
\draw[gray, thick,dashed] (-10,-1) -- (-11,-1);
\filldraw[black] (-14,-1) circle (2pt) node[anchor=south] {$i_{1}$};
\filldraw[black] (-7,-1) circle (2pt) node[anchor=south] {$n+1$};
\filldraw[black] (-12,-2) circle (2pt) node[anchor=south] {$i_{2}$};
\filldraw[black] (-9,-2) circle (2pt) node[anchor=south] {$j_{2}+1$};

\end{tikzpicture}

\item[{\bf Type \RomanNumeralCaps 4:}] $k_{i}m_{i_{1},n+1},$ where $i_{1}\geq i+1$.

\begin{tikzpicture}
\draw[gray, thick] (-13,-2) -- (-13,-2);
\draw[gray, thick] (-10,-1) -- (-6,-1);
\draw[gray, thick,dashed] (-10,-1) -- (-11,-1);
\filldraw[black] (-10,-1) circle (2pt) node[anchor=south] {$i_{1}$};
\filldraw[black] (-6,-1) circle (2pt) node[anchor=south] {$n+1$};
\filldraw[black] (-13,-2) circle (2pt) node[anchor=south] {$i$};
\filldraw[black] (-11,-1) circle (2pt) node[anchor=south] {$i_{1}-1$};

\end{tikzpicture}

\item[{\bf Type \RomanNumeralCaps 5:}] $k_{i}m_{i_{1},n+1},$ where $n\geq i\geq i_{1}$.

\begin{tikzpicture}
\draw[gray, thick] (-10,-2) -- (-10,-2);
\draw[gray, thick] (-13,-1) -- (-9,-1);

\filldraw[black] (-13,-1) circle (2pt) node[anchor=south] {$i_{1}$};
\filldraw[black]  (-9,-1) circle (2pt) node[anchor=south] {$n+1$};
\filldraw[black] (-10,-2) circle (2pt) node[anchor=south] {$i$};

\end{tikzpicture}
\item[{\bf Type \RomanNumeralCaps 6:}] $m_{i,n+1}^{2},$ where $n\geq i$.
\end{itemize}
\vspace{3mm}

Now we compare terms of each type  on the  left- and right-hand side of (\ref{coef}).
 We only provide a few details for the sake of brevity.
First, on the left-hand side of (\ref{coef}), straightforward computations with powers of the $q$-series give the following coefficients:  for terms of Type {\RomanNumeralCaps 1} and {\RomanNumeralCaps 4} coefficients are zero as they are absent from the formula. Similarly, for all Type {\RomanNumeralCaps 3} terms coefficients are also zero. For Type {\RomanNumeralCaps 5} and {\RomanNumeralCaps 6} the coefficients are $-1$ and $1$, respectively. For Type  {\RomanNumeralCaps 2}, we are interested in $q$-powers of $m_{i_{2},j_{2}+1}m_{i_{1},n+1}$. Their contribution comes from two sources:

\item[{\bf a}.] $q^{E({\bf m})}$, obtained from rewriting the product of non-commutative variables 
\begin{align*} (\prod_{s=i_{2}}^{j_{2}}x_{s})^{m_{i_{2},j_{2}+1}}\cdots (\prod_{s=i_{1}}^{n}x_{s})^{m_{i_{1},n+1}}\cdots, \end{align*}
in the form $x_{1}^{\lambda_{1}} \cdots x_{n}^{\lambda_{n}}$ This part contributes with $q^{-(j_{2}-i_{1})m_{i_{1},n+1}m_{i_{2},j_{2}+1}},$
        \item[{\bf b.}] $q^{C({\bf m})}$, where we have
        \begin{align*} \prod_{s=i_{1}}^{j_{2}}q^{\frac{m^{2}_{s,s+1}}{2}}=q^{(j_{2}-i_{1}+1)m_{i_{2},j_{2}+1}m_{i_{1},n+1}+ \cdots } . \end{align*}
Therefore, the overall coefficient of {Type \RomanNumeralCaps 2} terms (in the $q$-exponent) is $(j_{2}-i_{1}+1) -(j_{2}-i_{1})=1$.

Now we compute coefficients on the right-hand side of (\ref{coef}) for the same six types.
Again, Type  {\RomanNumeralCaps 1} terms are absent so their coefficients are zero.
We list all possible ways, in which we get contribution of {Type \RomanNumeralCaps 2} terms:
    \begin{itemize}
        \item[{\bf a.}] \begin{align*}\sum_{s=i_{1}}^{j_{2}}m^{2}_{s,s+1}=2(j_{2}-i_{1}+1)m_{i_{2},j_{2}+1}m_{i_{1},n+1}+\cdots, \end{align*}
        \item[{\bf b.}] \begin{align*}(\sum_{s=i_{1}}^{j_{2}}m_{s,s+1})m_{i_{1},n+1}=-(j_{2}-i_{1}+1)m_{i_{2},j_{2}+1}m_{i_{1},n+1}+\cdots,\end{align*}
        \item[{\bf c.}] \begin{align*}(\sum_{s=i_{1}}^{j_{2}}m_{s,s+1})m_{i_{2},j_{2}+1}=-(j_{2}-i_{1}+1)m_{i_{2},j_{2}+1}m_{i_{1},n+1}+\cdots,\end{align*}
        \item[{\bf d.}] \[m_{i_{2},j_{2}+1}m_{i_{1},n+1}.\]\vspace{0,2cm}
 \end{itemize}
Adding these up gives the coefficient of {Type \RomanNumeralCaps 2} to be $1.$
Along the same lines  Type {\RomanNumeralCaps 3} and {\RomanNumeralCaps 4} coefficients are zero, and Type {\RomanNumeralCaps 5} and {\RomanNumeralCaps 6} coefficients are $-1$ and $1$, respectively.

From above argument, we see that both sides of (\ref{coef}) have the same terms which involve $m_{i,n+1}$ ($1\leq i \leq n$).  Therefore, two identities are equivalent by induction.
\end{proof}

Next result is needed in Section 5 and can be proven as above. Let $B'({\bf m})$ be as in Theorem 1.1 in the introduction.
\begin{prop}\label{Rem Co}
For the case of $A_{n}$, the coefficients of six types of terms in $B'({\bf m})-\frac12{\mathbf{k} A\mathbf{k}^\top}$ are:
\begin{table}[H]
  \begin{center}
    \label{tab:table1}
    \begin{tabular}{l|c|r} 
      \textbf{Type} & \textbf{Coefficient} & \textbf{Condition} \\
      \hline
      {\RomanNumeralCaps 1} & 0  & \\
      \RomanNumeralCaps 2 & $2(j_{2}-i_{1})+1$ & \\
      \RomanNumeralCaps 3 & $2(j_{2}-i_{2})$ & \\
      \RomanNumeralCaps 4 & 0 & \\
      \RomanNumeralCaps 5 & -1 & $i=i_1$, $i=n$ \\
      \RomanNumeralCaps 5 & -2 & $i_1 < i < n$ \\
      \RomanNumeralCaps 6 &  n-i & \\
    \end{tabular}
  \end{center}
\end{table}
\end{prop}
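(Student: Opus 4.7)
The plan is to redo the case-by-case verification from the proof of Theorem~\ref{equi} with $B({\bf m})$ replaced by $B'({\bf m})$, and to record the resulting coefficients instead of checking that they cancel. The quadratic form $B'$ differs from $B$ in that its single defining sum allows the weak inequalities $i_1 \leq i_2$ and $j_1 \leq j_2$ under the common constraint $j_1 > i_2$; equivalently, $B'$ packages the analogues of Types~\RomanNumeralCaps{1}--\RomanNumeralCaps{3} of $B$ into one uniformly weighted index set. After performing the same substitution
\[
m_{i,i+1} = k_i - \sum_{\substack{1\leq s< i\\ i+1<\ell\leq n}} m_{s,\ell} - \sum_{\substack{s=i\\ i+1<\ell\leq n}} m_{s,\ell} - \sum_{\substack{1\leq s< i\\ \ell=i+1}} m_{s,\ell}
\]
used in Section~4, one expands $B'({\bf m}) - \tfrac12 \mathbf{k}^{\top} A \mathbf{k}$ as a quadratic form in the remaining variables $k_i$ and $m_{s,\ell}$ (with $\ell - s \geq 2$), then reads off the coefficient of each of the six types of $m_{i,n+1}$-containing monomials.

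The bookkeeping proceeds type by type. For Types~\RomanNumeralCaps{1} and~\RomanNumeralCaps{4} the constraint $j_1 > i_2$ in $B'$ is incompatible with the index gap $j_2+1 \leq i_1 - 1$, and the substitution is supported only on neighboring indices; hence both coefficients vanish. For Type~\RomanNumeralCaps{2}, the allowed pairs $(i_1,n+1),(i_2,j_2+1)$ with $i_2 < i_1 \leq j_2+1$ contribute from $B'$, and combining these with the expansion of the $-k_i k_{i+1}$ pieces over the range $i_1 \leq i \leq j_2$ yields $2(j_2 - i_1) + 1$. Type~\RomanNumeralCaps{3} is analogous with the constraint $i_1 \leq i_2$; the range contracts and the count becomes $2(j_2 - i_2)$.

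For Type~\RomanNumeralCaps{5} the coefficient of $k_i m_{i_1, n+1}$ comes entirely from substituting into $-\tfrac12 \mathbf{k}^{\top} A \mathbf{k}$. The boundary/interior dichotomy arises because an interior index $i$ with $i_1 < i < n$ appears in both neighbor products $-k_{i-1}k_i$ and $-k_i k_{i+1}$, doubling the contribution and yielding $-2$, whereas the boundary cases $i = i_1$ or $i = n$ appear in exactly one and give $-1$. Finally, for Type~\RomanNumeralCaps{6} (the diagonal $m_{i,n+1}^2$) the admissible pairs $(i,n+1),(i',j')$ under the $B'$-constraint $j' > i' $ with $i \leq i' \leq n$ and $j' \in \{i'+1,\ldots,n+1\}$ give an explicit count that, combined with the substitution into $-\tfrac12 k_i^2$ (through the $m_{i,i+1}$ summand that carries $-m_{i,n+1}$), nets to $n-i$.

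The main obstacle is careful accounting of the weak-vs-strict inequalities in the definition of $B'$ and the correct multiplicity in the boundary/interior split for Type~\RomanNumeralCaps{5}; the cancellations that occurred automatically in the proof of Theorem~\ref{equi} are replaced here by leftover contributions that must be collected with the right signs. No induction on $n$ is required, since the proposition merely tabulates the coefficients of the \emph{new} monomials introduced when passing from $A_{n-1}$ to $A_n$, and the induction structure of Theorem~\ref{equi} already guarantees that the $A_{n-1}$ sub-pattern is not disturbed by the present expansion.
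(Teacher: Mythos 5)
Your overall plan --- substitute $m_{i,i+1}$ as in the proof of Theorem~\ref{equi}, expand $B'({\bf m})-\tfrac12\mathbf{k}^{\top}A\mathbf{k}$ as a quadratic form in $\{k_i\}\cup\{m_{s,\ell}:\ell\geq s+2\}$, and read off the coefficient of each of the six monomial types --- is exactly the ``similar argument'' the paper gestures at, and the table you produce is correct.

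However, your description of the mechanism for Type~V (and the parallel remark for Type~VI) is wrong and would not give the claimed numbers if you tried to fill in the details. You say the coefficient of $k_i m_{i_1,n+1}$ ``comes entirely from substituting into $-\tfrac12\mathbf{k}^{\top}A\mathbf{k}$'' and attribute the $-1$ vs.\ $-2$ split to the neighbor products $-k_{i-1}k_i$, $-k_ik_{i+1}$. But $-\tfrac12\mathbf{k}^{\top}A\mathbf{k}$ contains no $m$-variables at all, so it contributes nothing to any $k_i m_{i_1,n+1}$ monomial; every Type~V contribution must come from $B'({\bf m})$ after the substitution $m_{s,s+1}=k_s-L_s$. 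Concretely, the diagonal $m_{i,i+1}^2=(k_i-L_i)^2$ contributes $-2\,k_i m_{i_1,n+1}$ whenever $i_1\leq i$ (since $m_{i_1,n+1}\in L_i$), and the off-diagonal products $m_{i,i+1}m_{i,n+1}$ and $m_{i_1,n+1}m_{n,n+1}$ --- which lie in the defining index set of $B'$ precisely when $i=i_1$ or $i=n$, respectively --- each add $+1$. That boundary correction, not a ``doubling'' in the Cartan form, is what produces the $-1/-2$ dichotomy. A similar miscount affects your Type~VI remark: the $n-i$ arises as $1+(n-i+1)-1-1$ from the diagonal entry of $B'$, the $L_s^2$ terms for $i\leq s\leq n$, and the two cross terms $-L_i m_{i,n+1}$, $-L_n m_{i,n+1}$, none of which involves $-\tfrac12 k_i^2$. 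You should redo the bookkeeping with the source of each contribution correctly identified as lying in $B'$.
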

\begin{cor} Let $W(\Lambda_{0})$ be the principal subspace of the affine vertex algebra $L_{1}(\fsl_{n})$ as in Section 3.
Then the Hilbert series of $J_{\infty}(R_{W(\Lambda_{0})})$ is given by \[\sum_{{\bf m} \in \mathbb{N}_{\geq 0}^{n(n-1)/2}} \frac{{ q^{B({\bf m})}} }{\prod_{1\leq i<j\leq n }(q)_{n_{i,j}}}.\]
\end{cor}
\begin{proof}
According to Proposition \ref{filtra}, Proposition \ref{jetchar} and Theorem \ref{equi}, we know that (recall $A=R_{W(\Lambda_0)}$) \[HS_{q}(J_{\infty}(R_{W(\Lambda_{0})}))\leq {\rm ch}[W(\Lambda_{0})](q). \]  Since there exists a grading preserving surjective homomorphism $$\psi: J_{\infty}(R_{W(\Lambda_{0})})\twoheadrightarrow gr^{F}(W(\Lambda_{0})),$$ we also have \[HS_{q}(J_{\infty}(R_{W(\Lambda_{0})}))\geq {\rm ch}[W(\Lambda_{0}))](q)\] and the assertion follows.
\end{proof}

Finally,  we can conclude:
\begin{cor} \label{main-cor}
The principal subspace $W(\Lambda_0)$ of $L_1(\fsl_n)$ is classically free.
\end{cor}



\section{$q$-identities from quiver representations}

In this part we discuss identity (\ref{main4}) from the perspective of quiver representations following \cite{rimanyi2018partition}. We will use notations and some formulas from \cite{rimanyi2018partition}, including several basic facts about quiver representations.
Indecomposable representations of the quiver $Q$ of type $A_{n-1}$ are in one-to-one correspondence with positive roots of $A_{n-1}$ enumerated by the segments: $[i,j]$, $1 \leq i \leq j \leq n-1$.
\begin{thm}\cite[Corollary 1.5]{rimanyi2018partition} \label{rr} Let $Q$ be a quiver of type $A_{n-1}$. Then for every
${\bf k}=(k_1,...,k_{n-1}) \in \mathbb{N}^{n-1}_{\geq 0}$ we have an identity
\begin{align}\label{Rimany} \frac{1}{\displaystyle\prod_{i=1}^{n-1} (q)_{k_i}}=\sum_{\eta} q^{{\rm codim}(\eta)} \frac{1}{\displaystyle \prod_{1\leq i\leq j\leq n-1}(q)_{m_{[i,j]}(\eta)}}\end{align}
where summation is over all finite-dimensional representations $\eta$ (up to equivalence) of $Q$ such that
$\overline{{\rm dim}}(\eta)={\bf k}$, ${\rm codim}(\eta)$ is the co-dimension of a certain orbit (see below for an explicit formula), and $m_{[i,j]}(\eta)$ indicates the multiplicity of $[i,j]$ in $\eta$.
\end{thm}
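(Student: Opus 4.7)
The plan is to prove (\ref{Rimany}) by point-counting on the representation variety of the type $A_{n-1}$ quiver $Q$ over $\mathbb{F}_q$, and then converting the result to a formal identity via the substitution $q\mapsto q^{-1}$. Fix any orientation of $Q$, so that $\mathrm{Rep}_{\mathbf{k}}(Q)=\prod_{a:i\to j}\mathrm{Mat}(k_i,k_j)$ is an affine space on which $G_{\mathbf{k}}=\prod_i GL_{k_i}$ acts by change of basis. By Gabriel's theorem together with Krull--Schmidt, the $G_{\mathbf{k}}(\mathbb{F}_q)$-orbits on $\mathrm{Rep}_{\mathbf{k}}(Q)(\mathbb{F}_q)$ are in bijection with tuples $(m_{[i,j]})_{1\le i\le j\le n-1}$ of nonnegative integers satisfying $\sum_{[i,j]\ni\ell}m_{[i,j]}=k_\ell$, via $(m_{[i,j]})\leftrightarrow\eta=\bigoplus_{[i,j]}M_{[i,j]}^{m_{[i,j]}}$.

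First I would apply orbit-stabilizer to write
\[q^{\dim\mathrm{Rep}_{\mathbf{k}}}\;=\;|\mathrm{Rep}_{\mathbf{k}}(\mathbb{F}_q)|\;=\;\sum_{\eta}\frac{|G_{\mathbf{k}}(\mathbb{F}_q)|}{|\mathrm{Aut}(\eta)(\mathbb{F}_q)|},\]
and then compute $|\mathrm{Aut}(\eta)(\mathbb{F}_q)|$ using the classical type-$A$ Hom-space classification, namely that $\dim\mathrm{Hom}(M_{[i,j]},M_{[i',j']})$ equals $1$ when $i\le i'\le j\le j'$ and vanishes otherwise. Extending this partial order on segments to a linear order endows $\mathrm{End}(\eta)$ with a block-upper-triangular structure having Levi component $\prod_{[i,j]}\mathrm{Mat}(m_{[i,j]})$ and unipotent radical of dimension $\dim\mathrm{End}(\eta)-\sum m_{[i,j]}^2$, yielding
\[|\mathrm{Aut}(\eta)(\mathbb{F}_q)|\;=\;q^{\dim\mathrm{End}(\eta)-\sum m_{[i,j]}^2}\prod_{[i,j]}|GL_{m_{[i,j]}}(\mathbb{F}_q)|.\]

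Next I would insert this into the orbit sum, substitute $\mathrm{codim}(\eta)=\dim\mathrm{Rep}_{\mathbf{k}}-\sum_i k_i^2+\dim\mathrm{End}(\eta)$, and regroup $q$-powers, arriving at
\[\prod_i\frac{q^{k_i^2}}{|GL_{k_i}(\mathbb{F}_q)|}\;=\;\sum_{\eta} q^{-\mathrm{codim}(\eta)}\prod_{[i,j]}\frac{q^{m_{[i,j]}^2}}{|GL_{m_{[i,j]}}(\mathbb{F}_q)|}.\]
The elementary identity $q^{k^2}/|GL_k(\mathbb{F}_q)|=1/\prod_{j=1}^k(1-q^{-j})$ then makes both sides rational functions of $q$, and the substitution $q\mapsto q^{-1}$ simultaneously converts each factor $1/\prod_{j=1}^k(1-q^{-j})$ into $1/(q)_k$ and each $q^{-\mathrm{codim}(\eta)}$ into $q^{\mathrm{codim}(\eta)}$, delivering (\ref{Rimany}) exactly.

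The main obstacle is the verification of the block-upper-triangular description of $\mathrm{End}(\eta)$: one must argue carefully that in type $A$ the incidence relation $i\le i'\le j\le j'$ is indeed a partial order on segments admitting a linear refinement along which $\mathrm{Aut}(\eta)$ becomes a "generalized parabolic" of the stated shape. Once this structural lemma is established, the $q$-power bookkeeping and the $q\mapsto q^{-1}$ substitution are routine, and the resulting identity extends from all prime powers to a formal equality in $\mathbb{Q}(q)$, hence to a power-series identity.
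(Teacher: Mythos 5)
The paper does not prove Theorem~\ref{rr}; it quotes it as \cite[Corollary 1.5]{rimanyi2018partition} and uses it as input, so there is no in-paper argument for you to compare against. Your $\mathbb{F}_q$-point-counting approach is a legitimate route to the identity, and it is the kind of argument one expects behind such Hall-algebra-flavoured statements.

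The structural claim you flag as the main obstacle is, however, misstated. The relation ``$[i,j]\preceq[i',j']$ iff $i\le i'\le j\le j'$'' is \emph{not} a partial order: transitivity fails, e.g.\ $[1,2]\preceq[2,4]$ and $[2,4]\preceq[3,5]$, yet $\mathrm{Hom}(M_{[1,2]},M_{[3,5]})=0$. What is true, and what you actually need, is that the directed graph on segments with an edge $[i,j]\to[i',j']$ whenever $\mathrm{Hom}$ is nonzero and the segments differ is acyclic: along any purported cycle the chain of inequalities $i_1\le i_2\le\cdots\le i_1$ and $j_1\le j_2\le\cdots\le j_1$ forces all the segments to coincide. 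Acyclicity gives a topological sort, and along it $\mathrm{End}(\eta)$ is block upper triangular, which is all the Levi/unipotent decomposition requires. A cleaner route avoids the ordering altogether: the Jacobson radical $J\subset\mathrm{End}(\eta)$ satisfies $\mathrm{End}(\eta)/J\cong\prod_{[i,j]}\mathrm{Mat}(m_{[i,j]})$ because the indecomposables have local endomorphism rings and are pairwise non-isomorphic, and $\mathrm{Aut}(\eta)$ is exactly the preimage of the unit group of this semisimple quotient, a $q^{\dim J}$-to-one surjection over $\mathbb{F}_q$; this gives $|\mathrm{Aut}(\eta)(\mathbb{F}_q)|=q^{\dim\mathrm{End}(\eta)-\sum m_{[i,j]}^2}\prod_{[i,j]}|GL_{m_{[i,j]}}(\mathbb{F}_q)|$ directly. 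With that repair, the orbit--stabilizer count, the identity $q^{k^2}/|GL_k(\mathbb{F}_q)|=\prod_{j=1}^k(1-q^{-j})^{-1}$, the rewriting $\mathrm{codim}(\eta)=\dim\mathrm{Rep}_{\mathbf{k}}-\sum_i k_i^2+\dim\mathrm{End}(\eta)$, and the final $q\mapsto q^{-1}$ substitution are all sound, and since equality holds at every prime power it holds in $\mathbb{Q}(q)$ and hence as a power-series identity.
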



Recall another result from \cite{rimanyi2018partition} for codimensions:
\begin{lem}
$${\rm codim}(\eta)=\sum_{[I,J] \in {\rm ConditionStrands}} m_{I} \cdot m_{J},$$
where
$${\rm ConditionStrands}=\{[I,J]: [I,J] \ {\rm satisfies \ conditions } \ (1), (2) \ {\rm or} \ (3) \},$$
where strand is a pair $I=[a,b]$, $1 \leq a \leq b \leq n-1$ (corresponding to indecomposable rep of $Q$) and


(1) $I=[w,x-1], J=[x,z]$, $w < x \leq z,$ e.g.,

\begin{tikzpicture}
\draw[gray, dashed] (-9,-2) -- (-6,-2);
\draw[gray, dashed] (-13,-1) -- (-10,-1);
\filldraw[black] (-13,-1) circle (2pt) node[anchor=south] {$w$};
\filldraw[black] (-10,-1) circle (2pt) node[anchor=south] {$x-1$};
\filldraw[black] (-9,-2) circle (2pt) node[anchor=south] {$x$};
\filldraw[black] (-6,-2) circle (2pt) node[anchor=south] {$z$};

\end{tikzpicture}

(2) $I=[w,y], J=[x,z]$, $w < x \leq y <z$ and the arrows $a_{x-1}$ and $a_{y}$ point in the same direction, e.g.,

\begin{tikzpicture}\usetikzlibrary{decorations.markings}
\draw[gray, dashed] (-11,-2) -- (-8,-2);
\draw[gray, dashed] (-13,-1) -- (-10,-1);
\filldraw[black] (-13,-1) circle (2pt) node[anchor=south] {$w$};
\filldraw[black] (-10,-1) circle (2pt) node[anchor=south] {$y$};
\filldraw[black] (-11,-2) circle (2pt) node[anchor=south] {$x$};
\filldraw[black] (-8,-2) circle (2pt) node[anchor=south] {$z$};
\filldraw[black] (-12,-1) circle (2pt);
\filldraw[black] (-11,-1) circle (2pt);
\filldraw[black] (-10,-2) circle (2pt);
\filldraw[black] (-9,-2) circle (2pt);
\begin{scope}[thick,decoration={
    markings,
    mark=at position 0.5 with {\arrow{>}}}
    ]
    \draw[postaction={decorate}] (-12,-1)--(-11,-1);
    \draw[postaction={decorate}] (-10,-2)--(-9,-2);
\end{scope}
\end{tikzpicture}

(3) $I=[x,y], J=[w,z]$, $w < x \leq  y <z$ and the arrows $a_{x-1}$ and $a_{y}$ point in different directions, e.g.,

\begin{tikzpicture}
\draw[gray, dashed] (-11,-2) -- (-8,-2);
\draw[gray, dashed] (-13,-1) -- (-10,-1);
\filldraw[black] (-13,-1) circle (2pt) node[anchor=south] {$w$};
\filldraw[black] (-10,-1) circle (2pt) node[anchor=south] {$y$};
\filldraw[black] (-11,-2) circle (2pt) node[anchor=south] {$x$};
\filldraw[black] (-8,-2) circle (2pt) node[anchor=south] {$z$};
\filldraw[black] (-12,-1) circle (2pt);
\filldraw[black] (-11,-1) circle (2pt);
\filldraw[black] (-10,-2) circle (2pt);
\filldraw[black] (-9,-2) circle (2pt);

\begin{scope}[thick,decoration={
    markings,
    mark=at position 0.5 with {\arrow{>}}}
    ]
    \draw[postaction={decorate}] (-12,-1)--(-11,-1);
    \draw[postaction={decorate}] (-9,-2)--(-10,-2);
\end{scope}

\end{tikzpicture}

\end{lem}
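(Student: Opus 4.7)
The plan is to derive the codimension formula from two standard facts in quiver representation theory. First, by Voigt's lemma, for any $\eta$ the codimension of the $\prod_i \mathrm{GL}(k_i)$-orbit through $\eta$ in $\mathrm{Rep}(Q, \underline{\dim}\,\eta)$ equals $\dim \mathrm{Ext}^1_Q(\eta,\eta)$, since the tangent space to $\mathrm{Rep}(Q, \underline{\dim}\,\eta)$ at $\eta$ modulo the tangent to the orbit is canonically $\mathrm{Ext}^1(\eta,\eta)$. Second, by Gabriel's theorem, the indecomposable representations of a type $A_{n-1}$ quiver are exactly the interval modules $M_{[i,j]}$, so every representation decomposes uniquely as $\eta \cong \bigoplus_{[i,j]} M_{[i,j]}^{\oplus m_{[i,j]}(\eta)}$. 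Bilinearity of $\mathrm{Ext}^1$ then reduces the computation to
\[
\mathrm{codim}(\eta) \;=\; \sum_{I,J} m_I(\eta)\, m_J(\eta)\, \dim \mathrm{Ext}^1(M_I, M_J),
\]
and everything hinges on computing $\dim \mathrm{Ext}^1(M_I,M_J)$ for pairs of interval modules.

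The key step is to show that $\dim \mathrm{Ext}^1(M_I, M_J) \in \{0,1\}$ and equals $1$ precisely when the ordered pair $(I,J)$ satisfies one of the conditions (1), (2) or (3). I would establish this by combining the Euler form identity
\[
\dim \mathrm{Hom}(M_I, M_J) - \dim \mathrm{Ext}^1(M_I, M_J) \;=\; \langle \underline{\dim}\, M_I,\, \underline{\dim}\, M_J\rangle
\]
with the well-known description of morphisms between interval modules: $\dim \mathrm{Hom}(M_I, M_J) \leq 1$, with equality exactly when the overlap of the supports of $I$ and $J$ is compatible with the arrow orientations at its boundary. A direct evaluation of the Euler pairing $\langle \underline{\dim}\, M_I, \underline{\dim}\, M_J\rangle$ in terms of the segment endpoints and the local arrow directions then pins down $\dim \mathrm{Ext}^1(M_I, M_J)$ in each configuration.

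The main obstacle is the combinatorial case analysis. One must enumerate the relative positions of $I = [a,b]$ and $J = [c,d]$ (disjoint, adjacent, properly overlapping, nested) and, in each case, further subdivide according to the directions of the arrows $a_{x-1}$ and $a_y$ at the "boundary" between the overlap and the non-overlap. This case split explains the asymmetry in the statement between (2) and (3): for strands that overlap without one containing the other, the nontrivial extension exists precisely when both boundary arrows point in the same direction, while for nested strands the nontrivial extension exists precisely when they point in opposite directions; in the adjacent case (1) one always gets a one-dimensional $\mathrm{Ext}^1$, with no arrow condition. Once this classification is established, substituting into the bilinearity formula above recovers the stated expression for $\mathrm{codim}(\eta)$, summed over $\mathrm{ConditionStrands}$.
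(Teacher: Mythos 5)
The paper does not prove this lemma; it is recalled verbatim from Rim\'anyi--Weigandt--Yong \cite{rimanyi2018partition}, so there is no in-paper argument to compare against. Your strategy (Voigt's lemma $\mathrm{codim}(\eta)=\dim\mathrm{Ext}^1(\eta,\eta)$, Gabriel's theorem identifying indecomposables with interval modules, bilinearity of $\mathrm{Ext}^1$, then an Euler-form computation) is exactly the standard route and is the right one.

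There is, however, a genuine imprecision in your key step that would derail the case analysis if carried out as written. You claim that $\dim\mathrm{Ext}^1(M_I,M_J)=1$ \emph{precisely when the ordered pair} $(I,J)$ satisfies one of the conditions (1)--(3). This is false in general. Take condition (1) with an $A_2$ quiver: if the arrow is $1\to 2$, then $\mathrm{Ext}^1(M_{[1,1]},M_{[2,2]})\neq 0$, but if the arrow is $2\to 1$, then $\mathrm{Ext}^1(M_{[1,1]},M_{[2,2]})=0$ and instead $\mathrm{Ext}^1(M_{[2,2]},M_{[1,1]})\neq 0$; yet condition (1) has no arrow requirement and always labels $I=[1,1]$, $J=[2,2]$. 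The same direction-dependence occurs for conditions (2) and (3). The correct formulation, which is what makes the lemma orientation-independent in the form stated, is: for distinct interval modules at most one of $\mathrm{Ext}^1(M_I,M_J)$, $\mathrm{Ext}^1(M_J,M_I)$ is nonzero (directedness of the AR quiver for finite type), $\dim\mathrm{Ext}^1(M_I,M_I)=0$ (rigidity), and the \emph{unordered} pair $\{I,J\}$ has $\dim\mathrm{Ext}^1(M_I,M_J)+\dim\mathrm{Ext}^1(M_J,M_I)=1$ if and only if one of (1)--(3) holds. Then
\[
\mathrm{codim}(\eta)=\dim\mathrm{Ext}^1(\eta,\eta)=\sum_{\{I,J\}}m_I m_J\bigl(\dim\mathrm{Ext}^1(M_I,M_J)+\dim\mathrm{Ext}^1(M_J,M_I)\bigr)
\]
gives the stated formula with $\mathrm{ConditionStrands}$ read as unordered pairs. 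Beyond this, your proof is a plan rather than a proof: the decisive combinatorial step (evaluating the Euler form and $\mathrm{Hom}$ for each relative position and boundary orientation of two intervals, and matching the outcome against (1)--(3)) is only gestured at. That case analysis is where the content of the lemma lives, and it needs to be written out to constitute a proof.
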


\vspace{5mm}
For our purpose, we rewrite (\ref{Rimany}) as a family of identities in the shape of (\ref{generic2}):



\begin{align}\label{Ram} &  \displaystyle \sum_{{\bf k}=(k_1,...,k_{n-1}) \in \mathbb{N}^{n-1}_{\geq 0}}\frac{ {\bf x^{k}}}{\prod_{i=1}^{n-1} (q)_{k_i}}  =\sum_{\eta } \frac{q^{{\rm codim }(\eta)}{\bf x^{\overline{dim}(\eta)} }}   {\displaystyle \prod_{1\leq i\leq j\leq n-1}(q)_{m_{[i,j]}(\eta)}},
\end{align}
and the summation is over all finite-dimensional representation $\eta$ of $A_{n-1}$ (up to isomorphism).
\vspace{5mm}
\begin{ex}[$\fsl_{3}$]
Representation $\eta$ of the $A_2$ quiver of
type $\overline{{\rm dim}}(\eta)=(m,n)$ over complex numbers is given by ${\mathbb{C}^m \atop \circ } \rightarrow {\mathbb{C}^n \atop \circ}$.
Indecomposable representations are given by $[1,1]:={\mathbb{C} \atop \circ } \rightarrow {0 \atop \circ}$,
 $[2,2]:={0 \atop \circ } \rightarrow {\mathbb{C} \atop \circ}$, and $[1,2]={\mathbb{C} \atop \circ } \rightarrow {\mathbb{C} \atop \circ}$.
Condition (1) is $[1,1],[2,2]$ and there are no pairs with conditions two and three. Therefore
we have
$${\rm codim}(\eta)=m_{[1,1]}(\eta) \cdot m_{[2,2]}(\eta),$$
and we are summing over all representations $\eta$ with $\overline{dim}(\eta)={\bf k}=(m,n)$. That means

$$\frac{1}{(q)_m(q)_n}=\sum_{{m_{[1,1]},m_{[2,2]},m_{[1,2]} \geq 0 \atop m_{[1,1]}+m_{[1,2]}=m} \atop m_{[2,2]}+m_{[1,2]}=n }q^{m_{[1,1]} \cdot m_{[2,2]}}\frac{1}{(q)_{m_{[1,1]}} (q)_{m_{[2,2]}} (q)_{m_{[1,2]}}},$$
which is precisely formula (8) (we only have to rewrite $m-n_2=n_1$ and $n-n_2=n_3$, and use
that summation variables are $m_{[1,1]}=n_1$, $m_{[1,2]}=n_2$ and $m_{[2,2]}=n_3$).


\end{ex}


Then we can prove the following:
\begin{thm}
The identities (\ref{Ram}) and (\ref{main4}) are equivalent for $n\geq 2$.
\end{thm}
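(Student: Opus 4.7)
The plan is to reduce the equivalence of (\ref{Ram}) and (\ref{main4}) to a single polynomial identity comparing $B'(\mathbf{m})$ with the Rimanyi codimension, and then to prove that identity by induction on $n$, following the same blueprint as the proof of Theorem \ref{equi}. I would first identify an indecomposable segment $[i,j-1]$ of the $A_{n-1}$ quiver with the variable $m_{i,j}$ appearing in (\ref{main4}); a direct check then shows $\overline{\dim}(\eta)_\ell = \sum_{s \leq \ell < j} m_{s,j} = \lambda_\ell$, so the two identities carry the same change-of-variables matrix $U$ and the same monomial $\mathbf{x}^{U\mathbf{m}}$.

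Since (\ref{Rimany}) is an identity for each fixed $\mathbf{k}$, I multiply through by $q^{\frac12 \mathbf{k}^{\top} A \mathbf{k}}$ and sum over $\mathbf{k}$ to rewrite (\ref{Ram}) in the form
\[
\sum_{\mathbf{k}} \frac{q^{\frac12 \mathbf{k}^{\top} A \mathbf{k}} \mathbf{x}^{\mathbf{k}}}{\prod_{i}(q)_{k_i}} \;=\; \sum_{\mathbf{m}} \frac{q^{\,{\rm codim}(\eta(\mathbf{m})) + \frac12 \lambda^{\top} A \lambda}\, \mathbf{x}^{U\mathbf{m}}}{\prod_{i<j}(q)_{m_{i,j}}}.
\]
Comparing with (\ref{main4}) and invoking the equivalence criterion (\ref{generic})--(\ref{generic2}), the theorem reduces to the polynomial identity
\[
B'(\mathbf{m}) \;=\; {\rm codim}(\eta(\mathbf{m})) + \tfrac12 \lambda^{\top} A \lambda \qquad (\star)
\]
holding for every $\mathbf{m}$. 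To make ${\rm codim}$ into an explicit polynomial in $\mathbf{m}$, I fix the orientation $a_i : i \to i+1$ of the $A_{n-1}$ quiver; condition (3) is then vacuous and condition (2) applies to every $w<x\leq y<z$, so
\[
{\rm codim}(\eta) = \sum_{w<x\leq z} m_{[w,x-1]} m_{[x,z]} + \sum_{w<x\leq y<z} m_{[w,y]} m_{[x,z]}.
\]

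The identity $(\star)$ is proved by induction on $n$. The base case $n=3$ (the $sl_3$ example) gives $B'(\mathbf{m}) - \tfrac12 \lambda^{\top}A\lambda = m_{1,2} m_{2,3} = {\rm codim}(\eta)$, and a rank-$4$ check provides additional sanity. For the inductive step, passing from $A_{n-1}$ to $A_{n}$ introduces the new variables $m_{1,n+1},\ldots,m_{n,n+1}$, corresponding to the new indecomposables $[1,n],\ldots,[n,n]$. Since $(\star)$ holds on the old variables by induction, it suffices to match, on both sides, the coefficient of each monomial involving at least one of the new $m_{i,n+1}$. On the left-hand side, Proposition \ref{Rem Co} organizes these contributions to $B'(\mathbf{m}) - \tfrac12 \mathbf{k}^{\top} A \mathbf{k}$ into six types; on the right-hand side, each new codimension contribution comes from a pair of segments at least one of which has the form $[i,n]$, and is classified by whether it satisfies condition (1) or condition (2). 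Type by type: Types I, III, and IV give zero on both sides; Type II (bulk case $i_1 \leq j_2$) matches the condition (2) contribution, and its boundary case $i_1 = j_2+1$ matches condition (1); Type V matches the remaining condition (1) contributions; and Type VI balances the diagonal of $B'$ against the Cartan quadratic form.

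The main obstacle is the bookkeeping in this last step: once $k_i = \sum_{s \leq i < \ell} m_{s,\ell}$ is expanded, old and new $m$'s mix and the resulting cross terms must be carefully redistributed among the six types before they can be compared with codimension contributions. Proposition \ref{Rem Co} is designed precisely to encode this accounting, and once the match is verified type by type---a finite check entirely analogous to the six-type analysis in Theorem \ref{equi}---the induction carries through in arbitrary rank and the two identities are seen to be equivalent in the sense of (\ref{generic})--(\ref{generic2}).
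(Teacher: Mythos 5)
Your overall plan is the paper's: identify $m_{[i,j]}$ with $m_{i,j+1}$, fix the left-to-right orientation so condition (3) is vacuous, reduce the equivalence to the polynomial identity $B'(\mathbf{m}) - \tfrac12 \mathbf{k}^\top A \mathbf{k} = \mathrm{codim}(\eta)$, and run an induction on rank by matching the six types of cross-terms involving the new variables $m_{i,n+1}$ against Proposition~\ref{Rem Co}. That is exactly what the paper does, so there is no new idea here.

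However, your type-by-type summary of the verification is incorrect, in a way that suggests you carried over the tallies from Theorem~\ref{equi} without re-checking them for $B'$. You assert that Types I, III, and IV all vanish on both sides. That is true for the quantum dilogarithm identity (which involves $B$, not $B'$): in Theorem~\ref{equi} the coefficients are $0,1,0,0,-1,1$. But for the quiver identity, the relevant comparison is against Proposition~\ref{Rem Co}, which explicitly lists the Type~III coefficient as $2(j_2-i_2)$ --- not zero --- and the paper's proof confirms this by producing a matching Type~III codimension contribution of $2(j_2-i_2)$ from pairs like $m_{[s,s]}m_{[s+1,s+1]}$ and $m_{[i_2,j_2]}m_{[j_2+1,j_2+1]}$. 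Similarly Type~II carries coefficient $2(j_2-i_1)+1$ (not simply ``one condition-(2) term''), Type~V splits into the values $-1$ and $-2$ depending on where $i$ sits relative to $i_1$ and $n$, and Type~VI gives $n-i$ rather than a uniform diagonal cancellation. You cite Proposition~\ref{Rem Co} and then contradict it in the same paragraph. The induction would still go through if these numbers were computed correctly --- the blueprint is sound --- but as written, the key finite check, which is the entire content of the inductive step, is stated falsely and so the proof does not stand.
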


\begin{proof}We have proved the equivalence for $A_{2}$. We assume the edges of the $A_{n}$ quiver are all oriented from left to  right, i.e,  \begin{tikzpicture}

\draw[gray, dashed] (-13,-1) -- (-10,-1);
\filldraw[black] (-13,-1) circle (2pt);
\filldraw[black] (-10,-1) circle (2pt);
\filldraw[black] (-12,-1) circle (2pt);
\filldraw[black] (-11,-1) circle (2pt);

\begin{scope}[thick,decoration={
    markings,
    mark=at position 0.5 with {\arrow{>}}}
    ]
    \draw[postaction={decorate}] (-12,-1)--(-11,-1);
\end{scope}

\end{tikzpicture}. Because of the orientation, there is no pair with condition (3) appearing in the formula for ${\rm codim}(\eta)$. 

We will prove that these two $q$-series identities are equivalent using the induction on $n$. Assume that (\ref{Ram}) and (\ref{main4}) are equivalent for $A_{n-1}.$ For the case of $A_{n}$, by identifying $m_{[i,j]}$ with $m_{i,j+1}$, and letting  $m_{i,i+1}$ (i.e. $m_{[i,i]}$)  be

$$k_{i}-\displaystyle \sum _{ 1 \leq s< i \atop i+1 < \ell \leq n+1}m_{s,\ell}-\sum _{ s=i \atop i+1 < \ell \leq n+1}m_{s,\ell}-\sum _{ 1 \leq s< i \atop  \ell= i+1  }m_{s,\ell},$$
 it is enough to show that  
\begin{align}\label{new} {\rm codim}(\eta)=B'({\bf m})-\frac12{\mathbf{k}A\mathbf{k}^\top}.\end{align} It is clear that we have the same terms involving $k_{i}k_{j}$ $(1\leq i\leq j\leq n)$ on both sides of (\ref{new}). Note that we also have 6 types of terms that involve  \[m_{[i_{2},j_{2}]}m_{[i_{1},n]}\quad (m_{i_{2},j_{2}+1}m_{i_{1},n+1}).\]  Then we compute the coefficients of these terms for ${\rm codim}(\eta)$.

\begin{itemize}
    \item Terms in ${\rm codim}(\eta)$ that involve {Type \RomanNumeralCaps 1} term are the following: \begin{itemize}
        \item[{\bf a.}] $m_{[i_{2},i_{1}-1]}m_{[i_{1},i_{1}]}=-m_{[i_{2},i_{1}-1]}m_{[i_{1},n]}+\cdots,$
        \item[{\bf b.}] $m_{[i_{1}-1,i_{1}-1]}m_{[i_{1},n]}=-m_{[i_{2},i_{1}-1]}m_{[i_{1},n]}+\cdots,$
        \item[{\bf c.}] $m_{[i_{1}-1,i_{1}-1]}m_{[i_{1},i_{1}]}=m_{[i_{2},i_{1}-1]}m_{[i_{1},n]}+\cdots,$
        \item[{\bf d.}] $m_{[i_{2},i_{1}-1]}m_{[i_{1},n]}.$
    \end{itemize}
    Therefore, the coefficient of each {Type \RomanNumeralCaps 1} term is 0.
    \item We list all terms that involve {Type \RomanNumeralCaps 2} term as following:


     \begin{itemize}
    \item[{\bf a.}] \begin{align*} &m_{[i_{1}-1,i_{1}-1]}m_{[i_{1},i_{1}]}+\sum_{s=i_{1}}^{j_{2}-1}m_{[s,s]}m_{[s+1,s+1]} +m_{[j_{2},j_{2}]}m_{[j_{2}+1,j_{2}+1]}\\ &=(2(j_{2}-i_{1})+2)m_{[i_{2},j_{2}]}m_{[i_{1},n]}+\cdots,\end{align*}
     \item[{\bf b.}] \[m_{[i_{1}-1,i_{1}-1]}m_{[i_{1},n]}=-m_{[i_{2},j_{2}]}m_{[i_{1},n]}+\cdots,\]

         \[m_{[i_{2},j_{2}]}m_{[j_{2}+1,j_{2}+1]}=-m_{[i_{2},j_{2}]}m_{[i_{1},n]}+\cdots,\]

     \item[{\bf c.}] \[m_{[i_{2},j_{2}]}m_{[i_{1},n]}.\]
    \end{itemize}
    \vspace{0.1cm}
   Therefore, the coefficient of each {Type \RomanNumeralCaps 2} term is $2(j_{2}-i_{1})+1.$

\item For {Type \RomanNumeralCaps 3} terms, we list all possible contributions from ${\rm codim}(\eta)$:

\begin{itemize}
    \item[{\bf a.}] \begin{align*}
        & m_{[i_{2}-1,i_{2}-1]}m_{[i_{2},i_{2}]}+\sum_{s=i_{2}}^{j_{2}-1}m_{[s,s]}m_{[s+1,s+1]}+m_{[j_{2},j_{2}]}m_{[j_{2}+1,j_{2}+1]}\\&=(2(j_{2}-i_{2})+2)m_{[i_{2},j_{2}]}m_{[i_{1},n]}+\cdots,
    \end{align*}
    \item[{\bf b.}] \[m_{[i_{2},j_{2}]}m_{[j_{2}+1,j_{2}+1]}=-m_{[i_{2},j_{2}]}m_{[i_{1},n]}+\cdots,\]
                  \[m_{[i_{2}-1,i_{2}-1]}m_{[i_{2},j_{2}]}=-m_{[i_{2},j_{2}]}m_{[i_{1},n]}+\cdots,\]
    \vspace{0.1cm}

\end{itemize}
 Thus, the coefficient of each {Type \RomanNumeralCaps 3} term is $2(j_{2}-i_{2}).$

 \item For {Type \RomanNumeralCaps 4} and {Type \RomanNumeralCaps 5} terms, we have \begin{itemize}
    \item[{\bf a.}] When $i_{1}>i+1,$ there is no term that involves  $k_{i}m_{[i_{1},n]}.$
    \item[{\bf b.}] When $i_{1}=i+1$, we have \[m_{[i,i]}m_{[i_{1},n]}+m_{[i,i]}m_{[i+1,i+1]}=0(k_{i}m_{[i_{1},n]})+\cdots.\]
    \item[{\bf c.}] When $i_{1}= i,$ the only term that involves  $k_{i}m_{[i_{1},n]}$ is \[m_{[i,i]}m_{[i+1,i+1]}=-k_{i}m_{[i_{1},n]}+\cdots.\]
    \item[{\bf d.}] When $n>i>i_{1}$, the following term involves  $k_{i}m_{[i_{1},n]}$:
     \[m_{[i-1,i-1]}m_{[i,i]}+m_{[i,i]}m_{[i+1,i+1]}=-2k_{i}m_{[i_{1},n]}+\cdots,\]
   \item[{\bf e.}] When $i=n$, the only term that involves with $k_{i}m_{[i_{1},n]}$ is \[m_{[n-1,n-1]}m_{[n,n]}=-k_{i}m_{[i_{1},n]}+\cdots.\]
\end{itemize}

 \item For {Type \RomanNumeralCaps 6} term, we have \begin{align*}\sum_{s=i}^{n-1}m_{[s,s]}m_{[s+1,s+1]}=(n-i)m^{2}_{[i,n]}+\cdots.\end{align*} So the coefficient of $m^{2}_{[i,n]}$ is $n-i.$

\end{itemize}

 By induction hypothesis and Proposition \ref{Rem Co}, we see that,
  $$B'({\bf m})-\frac{1}{2}{\mathbf{k}A\mathbf{k}^\top}={\rm codim}(\eta).$$ Thus, two $q$-series identities are equivalent.
\end{proof}

\begin{rem} It was briefly mentioned in  \cite{rimanyi2018partition} that Keller's quantum dilogarithm identity for type $A$ quivers \cite{keller2011cluster} is closely related to Theorem \ref{rr}. This section can be viewed as a precise clarification of that claim at least for one particular orientation of the quiver. 
\end{rem}

\section{Level one principal subspace of $B_2$}

In this and next section we consider principal subspaces of two level one representations of affine Lie algebras
for which we have a well-known spinor realization.
For more about level one spinor realization for $B_{\ell}^{(1)}$ and $D_{\ell}^{(1)}$ affine Kac-Moody Lie algebras
we refer the reader to \cite{feingold1985classical}.

\subsection{Spinor representation of $B_\ell^{(1)}$}

Level one affine vertex algebra of type $B_{\ell}^{(1)}$, denoted by $L_1(\so_{2\ell+1})$,  has a spinor realization via $2 \ell+1$ fermions.
We take
$$\mathcal{F}_{\ell}(Z+1/2):=\Lambda(a_i(-1/2),a_i(-3/2),\cdots ,a_i^*(-1/2),a_i^*(-3/2),...; 1 \leq i \leq \ell)$$
and $\mathcal{F}=\Lambda(e(-1/2),e(-3/2),...)$; here $\Lambda( \ \cdot \ )$ denotes the exterior algebra as in \cite{feingold1985classical}. Then the even part of
the vertex superalgebra
$$\mathcal{F}_{\ell}(Z+1/2) \otimes \mathcal{F}$$
is isomorphic to the affine vertex algebra $L_{1}(\so_{2 \ell+1})$. It is easy to see from the realization that the  principal subspace $W(\Lambda_0) \subset L_1(\so_{2\ell+1})$ is strongly generated by the following fields
$$ : a_i a_j :, \ \ \ 1 \leq i < j \leq \ell  \ \ \ \   : a_i a_j^* : \ \ \  1 \leq i < j , \ \ \  \  : a_i e:, 1 \leq i \leq \ell,$$
Note that we have $\ell(\ell-1)/2+\ell(\ell-1)/2+ \ell=\ell^2$ generators,  the dimension of $\frak{ n}_+.$

\subsection{Principal subspace of $L_1(\so_{5})$}
For $B_2^{(1)}$, the principal subspace $W(\Lambda_0)$ is generated by
$$ :a_1 a_2: ,  \ \: a_1 a_2^*:, \ \  :a_1 e:, \ \  :a_2 e: $$
corresponding to positive roots $\epsilon_1+\epsilon_2, \epsilon_1-\epsilon_2, \epsilon_1,\epsilon_2$, respectively. Its $C_{2}$-algebra, $R_{W(\Lambda_0)}$, is generated by $x_{\epsilon_1+\epsilon_2},x_{\epsilon_1-\epsilon_2}, x_{\epsilon_2},x_{\epsilon_1}$, and we have the following relations in $R_{W(\Lambda_0)}$:
\begin{align*}
& x_{\epsilon_1+\epsilon_2}^2=0,\quad x_{\epsilon_1-\epsilon_2}^2=0,\quad x_{\epsilon_1}^2-x_{\epsilon_1+\epsilon_2}x_{\epsilon_1-\epsilon_2}=0,\quad x_{\epsilon_2} x_{\epsilon_1+\epsilon_2}=0,\\ & \quad x_{\epsilon_1} x_{\epsilon_1-\epsilon_2}=0,\quad \quad x_{\epsilon_1} x_{\epsilon_1+\epsilon_2}=0,\quad x_{\epsilon_2}^3=0,\quad  x_{\epsilon_2}^2 x_{\epsilon_1}=0.
\end{align*}


We denote by $A$ the commutative algebra
\begin{align*} \frac{\mathbb{C}[x_{\epsilon_1+\epsilon_2},x_{\epsilon_1-\epsilon_2}, x_{\epsilon_2},x_{\epsilon_1}
]}{( x_{\epsilon_1+\epsilon_2}^2, x_{\epsilon_1-\epsilon_2}^2,x_{\epsilon_1}^2-x_{\epsilon_1+\epsilon_2}x_{\epsilon_1-\epsilon_2},x_{\epsilon_2} x_{\epsilon_1+\epsilon_2}, x_{\epsilon_1} x_{\epsilon_1-\epsilon_2}, x_{\epsilon_1} x_{\epsilon_1+\epsilon_2},x_{\epsilon_2}^3, x_{\epsilon_2}^2 x_{\epsilon_1})}. \end{align*}

\subsection{Character formula} It is known that the character of the principal subspace of $ L_{1}(\so_5)$ is
given by \cite{butorac2014combinatorial}:\begin{align}\label{char B}{\rm ch}[W(\Lambda_0)]=\sum_{r_1,r_2,r_3 \geq 0} \frac{q^{r_1^2+(r_2+r_3)^2+r_3^2-r_1(r_2+2 r_3)}}{(q)_{r_1}(q)_{r_2}(q)_{r_3}}.\end{align}
This identity is more complicated because $x_{\alpha_1}^3(z)=0$ but $x_{\alpha_1}^2(z) \neq 0$ for the short root $\alpha_1$. Observe that we do not need to impose
$x_{\epsilon_1}^3=0$ as it follows from other relations.

Let us introduce a filtration, $G$, on $J_{\infty}(A)$  by letting $G_{0}$  be  generated with $x_{\epsilon_1+\epsilon_2},x_{\epsilon_1-\epsilon_2},$ and (for $s \geq 1$)  \[G_{s}={\rm span}\left\{(x_{u})_{(i)}v| u\in\left\{\epsilon_{1},\epsilon_{2} \right\},v\in G_{s-1} \right\}+G_{s-1}.\]

Then we have the following lemma.
\begin{lem}\label{HSA}
The Hilbert series of the jet algebra of
\begin{align}\label{HS B} B:=\frac{\mathbb{C}[x_{\epsilon_1+\epsilon_2},x_{\epsilon_1-\epsilon_2}, x_{\epsilon_2},x_{\epsilon_1}
]}{( x_{\epsilon_1+\epsilon_2}^2, x_{\epsilon_1-\epsilon_2}^2,x_{\epsilon_1}^2,x_{\epsilon_2} x_{\epsilon_1+\epsilon_2}, x_{\epsilon_1} x_{\epsilon_1-\epsilon_2}, x_{\epsilon_1} x_{\epsilon_1+\epsilon_2},x_{\epsilon_2}^3, x_{\epsilon_2}^2 x_{\epsilon_1})}, \end{align}
is greater than or equal to   $HS_{q}(gr^{G}(J_{\infty}(A)))$.
\end{lem}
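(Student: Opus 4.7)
The plan is to mimic the argument used for the Claim in Section 3 (the $sl_n$ case), now applied to the single filtration $G$ introduced just above the lemma. The key observation is that in $gr^{G}(J_{\infty}(A))$, the only non-monomial defining relation of $A$, namely $x_{\epsilon_1}^2 - x_{\epsilon_1+\epsilon_2}x_{\epsilon_1-\epsilon_2} = 0$, degenerates to the pure monomial $x_{\epsilon_1}^2 = 0$. Indeed, any product of two modes of $x_{\epsilon_1}$ lies in $G_2$, while any product of a mode of $x_{\epsilon_1+\epsilon_2}$ with a mode of $x_{\epsilon_1-\epsilon_2}$ lies in $G_0 \subset G_1$; consequently, when the binomial is viewed in $G_2/G_1$, the second summand is already zero, and the image of $x_{\epsilon_1}^2$ must vanish. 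The same reasoning applied to every $T^k$-derivative of the binomial then forces all jet-extensions of $x_{\epsilon_1}(z)^2 = 0$ to hold in $gr^{G}(J_{\infty}(A))$.

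Concretely, let $P := \mathbb{C}[x_{\epsilon_1+\epsilon_2}, x_{\epsilon_1-\epsilon_2}, x_{\epsilon_2}, x_{\epsilon_1}]$. I would construct a natural differential algebra epimorphism $\phi\colon J_{\infty}(P) \twoheadrightarrow gr^{G}(J_{\infty}(A))$ by sending each generator to its equivalence class in the appropriate $G_s/G_{s-1}$. This is well-defined because $T$ preserves the filtration (it shifts mode indices without creating or removing $x_{\epsilon_1}$- or $x_{\epsilon_2}$-factors), and it is onto because the classes of the four generators differentially generate $gr^{G}(J_{\infty}(A))$. It then suffices to check that every defining relation of $B$, together with all its $T^k$-derivatives, lies in the kernel of $\phi$: once this is established, $\phi$ factors through $J_{\infty}(B)$ to yield a surjection $J_{\infty}(B) \twoheadrightarrow gr^{G}(J_{\infty}(A))$, which gives $HS_q(J_{\infty}(B)) \geq HS_q(gr^{G}(J_{\infty}(A)))$.

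The verification splits into two cases. The seven monomial relations shared between $A$ and $B$, namely $x_{\epsilon_1+\epsilon_2}^2$, $x_{\epsilon_1-\epsilon_2}^2$, $x_{\epsilon_2}x_{\epsilon_1+\epsilon_2}$, $x_{\epsilon_1}x_{\epsilon_1-\epsilon_2}$, $x_{\epsilon_1}x_{\epsilon_1+\epsilon_2}$, $x_{\epsilon_2}^3$, $x_{\epsilon_2}^2 x_{\epsilon_1}$, are automatic: they vanish already in $J_{\infty}(A)$ and hence descend to $gr^{G}(J_{\infty}(A))$. The new monomial $x_{\epsilon_1}^2 = 0$ is handled by the filtration observation above.

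The main obstacle I anticipate is the careful bookkeeping required for the generic $T^k$-case: one must confirm that, after expanding via the Leibniz rule, the ``high-filtration'' part of $T^k(x_{\epsilon_1}^2 - x_{\epsilon_1+\epsilon_2}x_{\epsilon_1-\epsilon_2})$ consists exclusively of products of two $x_{\epsilon_1}$-modes (so that the image in $G_2/G_1$ produces precisely the jet-relation $(x_{\epsilon_1}(z))^2 = 0$), whereas the ``low-filtration'' part lies entirely in $G_0 \subset G_1$ and contributes nothing to $G_2/G_1$. This amounts to verifying that the filtration behaves multiplicatively under $T$-derivations in exactly the way the Section 3 construction implicitly assumed, after which the argument closes in direct parallel with the Claim of Section 3.
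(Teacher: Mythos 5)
Your proposal is correct and follows essentially the same route as the paper: both produce a surjection $J_{\infty}(B) \twoheadrightarrow gr^{G}(J_{\infty}(A))$ by noting that the shared monomial relations descend trivially and that the lone binomial $x_{\epsilon_1}^2 - x_{\epsilon_1+\epsilon_2}x_{\epsilon_1-\epsilon_2}$ degenerates to $x_{\epsilon_1}^2 = 0$ since the subtrahend sits in $G_0$ while the first term sits in $G_2$. The ``main obstacle'' you anticipate in your last paragraph is in fact automatic: the filtration degree of a monomial in $J_{\infty}(P)$ counts only how many mode-factors of $x_{\epsilon_1}$ or $x_{\epsilon_2}$ appear, independent of the mode labels, and $T$ only shifts labels; hence every term of $T^k(x_{\epsilon_1}^2)$ lies in $G_2$ and every term of $T^k(x_{\epsilon_1+\epsilon_2}x_{\epsilon_1-\epsilon_2})$ lies in $G_0$, with no cross-contamination to bookkeep.
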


\begin{proof}
Since both  $J_{\infty}(B)$ and  $gr^{G}(J_{\infty}(A))$ are generated by $x_{\epsilon_1+\epsilon_2},x_{\epsilon_1-\epsilon_2}, x_{\epsilon_2},x_{\epsilon_1}$,  \color{black} it is enough to show that the quotient relations of $J_{\infty}(B)$ also hold in $gr^{G}(J_{\infty}(A))$. 
Note
$$( x_{\epsilon_1+\epsilon_2}^2, x_{\epsilon_1-\epsilon_2}^2,x_{\epsilon_2} x_{\epsilon_1+\epsilon_2}, x_{\epsilon_1} x_{\epsilon_1-\epsilon_2}, x_{\epsilon_1} x_{\epsilon_1+\epsilon_2},x_{\epsilon_2}^3, x_{\epsilon_2}^2 x_{\epsilon_1})_{\partial}$$ are quotient relations of $gr^{G}(J_{\infty}(A))$. For the quotient relations of $J_{\infty}(A)$, $(x_{\epsilon_1}^2-x_{\epsilon_1+\epsilon_2}x_{\epsilon_1-\epsilon_2})_{\partial}$, we have \[(x_{\epsilon_1}^2)_{\partial}\subset  G_{2}\setminus G_{0},\,\,\,\text{while}\,\,\,(x_{\epsilon_1+\epsilon_2}x_{\epsilon_1-\epsilon_2})_{\partial} \subset G_{0}.\]
Therefore, quotient relations, $(x_{\epsilon_1}^2-x_{\epsilon_1+\epsilon_2}x_{\epsilon_1-\epsilon_2})_{\partial}$, in $J_{\infty}(A)$ give us $(x_{\epsilon_1}^2)_{\partial}$ in $gr^{G}(J_{\infty}(A))$. The result follows.
\end{proof}



\begin{prop} \label{upper-B}
The Hilbert series of $J_\infty(B)$  is bounded from above by   \[\sum_{n_1,n_2,n_3,n_4,n_5 \geq 0} \frac{q^{n_1^2+n_2^2+(n_3+n_5)^2+n_4^2+n_3^2+(2 n_3+n_5)(n_1)+n_4(n_1+n_2)+n_3 n_4}}{(q)_{n_1}(q)_{n_2}(q)_{n_3}(q)_{n_4}(q)_{n_5}},\]
in the sense that all $q$-coefficients of $HS_q(J_\infty(B))$ are less than or equal to the corresponding $q$-coefficients of this $q$-series.
\end{prop}
\begin{proof} We will prove that $J_\infty(B)$ is spanned by a set of monomials whose character is given by the
$q$-series in the statement.
We first observe that we can filter $J_\infty(B)$ with the number of "particles" of type $x_{\epsilon_2}$.
In the zero component of the filtration we have the jet algebra with relations
\[ ( x_{\epsilon_1+\epsilon_2}^2, x_{\epsilon_1-\epsilon_2}^2,x_{\epsilon_1}^2, x_{\epsilon_1} x_{\epsilon_1-\epsilon_2}, x_{\epsilon_1} x_{\epsilon_1+\epsilon_2})_{\partial} \]
whose Hilbert series is given by \cite{li2020some}
\[\sum_{n_1,n_2,n_4 \geq 0} \frac{q^{n_1^2+n_2^2+n_4^2+n_4(n_1+n_2)}}{(q)_{n_1}(q)_{n_2}(q)_{n_4}}.\]
Now we include $x_{\epsilon_2}$ generator and additional relations. It is know that the jet algebra of  $\mathbb{C}[x_{\epsilon_2}]/(x_{\epsilon_2}^3)$ admits a combinatorial basis satisfying difference two at the distance two condition as in Rogers-Selberg identities. Therefore, its Hilbert series is given by
\[\sum_{n_3,n_5 \geq 0} \frac{q^{(n_3+n_5)^2+n_3^3}}{(q)_{n_3}(q)_{n_5}}.\]
This formula can be also explained using jet schemes by letting $a=x_{\epsilon_2}^2$ and $b=x_{\epsilon_2}$
and considering $(ab,a-b^2,a^2)_{\partial}$. Since $a$ is of degree two, we get contribution $2n_3^2+2n_3 n_5+n_5^2$ in the exponent.

To finish the proof we have to analyze two additional relations and their contribution:
$$x_{\epsilon_2} x_{\epsilon_1+\epsilon_2}=0 \ \ {\rm and} \ \ x_{\epsilon_2}^2 x_{\epsilon_1}=0.$$
The first relation contributes with the boundary condition $(2n_3+n_5)n_1$ in the $q$-exponent as $(2n_3+n_5)$ counts the power
of $x_{\epsilon_2}$ in $(x_{\epsilon_2}^2)^{n_3} x_{\epsilon_2}^{n_5}$ and we have relation $x_{\epsilon_2} x_{\epsilon_1+\epsilon_2}=0$ ($n_1$ summation variable corresponds to $x_{\epsilon_1+\epsilon_2}$).
For the second relation, using $x_{\epsilon_2} x_{\epsilon_1} \neq 0$ and $x_{\epsilon_2}^2  x_{\epsilon_1}=0$,
we get a spanning set of monomials involving $a=x_{\epsilon_2}^2$ and $x_{\epsilon_{1}}$  to be
\begin{align*}
   \left\{a_{(-n_{i})}\cdots a_{(-n_{1})} (x_{\epsilon_{1}})_{(m_{j})}\cdots  (x_{\epsilon_{1}})_{(m_{1})}|j+1\leq
   n_{1},n_{s}-n_{s-1}\geq 4, m_{s}-m_{s-1}\geq 2\right\},
\end{align*}
where $j+1\leq n_{1}$ (boundary condition), $n_{s}-n_{s-1}\geq 4, m_{s}-m_{s-1}\geq 2$ (difference condition) are coming from $x_{\epsilon_2}^{2} x_{\epsilon_1}=0$, $x_{\epsilon_{2}}^{3}=0$ and $x_{\epsilon_{1}}^{2}=0$, respectively. Then the character of the space spanned by these monomials is at most  \[\sum_{n_3,n_4 \geq 0} \frac{q^{n_4^2+2n_3^2+n_3 n_4}}{(q)_{n_3}(q)_{n_4}}.\]
By combining these arguments together, we obtain the result.
\end{proof}

Next formula was first discussed in \cite[formula (4.5)]{Andrews}:
\begin{align} \label{an-ide}
&  \sum_{\substack{r_1,r_2,r_3 \geq 0}} \frac{q^{r_1^2+(r_2+r_3)^2+r_3^2-r_1(r_2+2 r_3)}}{(q)_{r_1}(q)_{r_2}(q)_{r_3}} \\
& = \left( \sum_{\substack{r_1,r_2,r_3 \geq 0}} \frac{q^{r_1^2+r_2^2+r_3^2+r_1 r_2+r_2 r_3}}{(q)_{r_1}(q)_{r_2}(q)_{r_3}}\right) \left( \sum_{n \geq 0} \frac{q^{n^2}}{(q)_n} \right)=
\frac{(-q;q)_\infty (-q;q^2)^2_\infty}{(q,q^4;q^5)_\infty}. \nonumber
\end{align}
This formula, in particular implies an interesting
relationship between character formulas of three different types of level one principal subspaces (to avoid confusion we added subscripts to indicate the underlying Lie algebra):  \begin{equation}
{\rm ch}[W_{B_2}(\Lambda_0)](\tau)={\rm ch}[W_{A_2}(\Lambda_0)](\tau) \cdot {\rm ch}[W_{A_1}(\Lambda_0)](\tau).
\end{equation}
Our next result shows that the upper bound obtained in Proposition \ref{upper-B} is in fact the character.
\begin{thm}\label{Btype}
We have the identity
\begin{align} \label{charge-id} &\sum_{\substack{r_1,r_2,r_3 \geq 0}} y_1^{r_1} y_2^{2r_3+r_2} \frac{q^{r_1^2+(r_2+r_3)^2+r_3^2-r_1(r_2+2 r_3)}}{(q)_{r_1}(q)_{r_2}(q)_{r_3}}= \sum_{\substack{n_1,n_2,n_3,n_4,n_{5} \geq 0}}\\  &
 y_1^{n_1+n_2+n_4}  y_2^{2n_1+2n_3+n_4+n_5}  \frac{q^{n_1^2+n_2^2+(n_3+n_5)^2+n_4^2+n_3^2+(2 n_3+n_5)(n_1)+n_4(n_1+n_2)+n_3 n_4}}{(q)_{n_1}(q)_{n_2}(q)_{n_3}(q)_{n_4}(q)_{n_5}}
 \nonumber  \end{align}
\end{thm}
\begin{proof} We first prove the identity specialized at $y_1=y_2=1$. We recall a formula from \cite[Lemma 1]{Andrews} (attributed to Askey):
\begin{align*}
& \sum_{n_1,...,n_r \geq 0 \atop m_1,...m_s,v \geq 0} \frac{q^{Q_1(n_1+v,n_2,..,n_r)+Q_2(m_1+v,m_2,...,m_s)+m_1 n_1}}{(q)_{n_1} \cdots (q)_{n_r}(q)_{m_1} \cdots (q)_{m_s}(q)_v} \\
& =\left(\sum_{n_1,,,,,n_r \geq 0} \frac{q^{Q_1(n_1,...,n_r)}}{(q)_{n_1} \cdots (q)_{n_r}} \right) \left(\sum_{m_1,...,m_s \geq 0} \frac{q^{Q_1(m_1,...,m_s)}}{(q)_{m_1} \cdots (q)_{m_s}} \right),
\end{align*}
where $Q_1$ and $Q_2$ are quadratic forms. Specialize now, with a more convenient choice of indices, $Q_1(n_1,n_4,n_2)=n_1^2+n_4^2+n_2^2+n_1 n_4 +n_4 n_2, Q_2(n_5)=n_5^2$.
Then the relation
$$Q_1(n_1+n_3,n_4,n_2)+Q_2(n_5+n_3)+n_5 n_1$$
$$=n_1^2 + n_2^2 + (n_3 + n_5)^2 + n_4^2 + n_3^2 + (2 n_3 + n_5) n_1 +
 n_4(n_1 + n_2) + n_3 n_4$$
together with (\ref{an-ide}) now gives the specialized formula.
To prove the formula with charge variables $y_1$ and $y_2$ it is sufficient to observe that our argument in Proposition \ref{upper-B} is also valid with the charge variables.
This means that the left-hand side in (\ref{charge-id}) is less than or equal the right-hand side for any coefficient $y_1^{m} y_2^{n}$. Since these expressions are
equal when $y_1=y_2=1$, all coefficients $y_1^{m} y_2^{n}$ must be equal on both sides so we have the claimed relation.
\end{proof}
\begin{cor}
The $C_{2}$-algebra of the principal subspace of $L_{1}({\so_5})$, $R_{W(\Lambda_0)}$, is isomorphic to $A$, and the principal subspace $W(\Lambda_0)$ is classically free.
\end{cor}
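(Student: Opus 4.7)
The plan is to sandwich $HS_q(J_\infty(R_{W(\Lambda_0)}))$ between a lower bound coming from the Arakawa-Li surjection $\psi$ and an upper bound coming from the filtration $G$ on $J_\infty(A)$ together with the spanning estimate of Proposition~\ref{upper-B}, and then invoke Conjecture~\ref{Btype} to force all inequalities to be equalities. Since the relations listed above the definition of $A$ all hold in $R_{W(\Lambda_0)}$, there is a surjective graded algebra homomorphism $A \twoheadrightarrow R_{W(\Lambda_0)}$; applying the right-exact jet functor gives $J_\infty(A) \twoheadrightarrow J_\infty(R_{W(\Lambda_0)})$, so in particular $HS_q(J_\infty(A)) \geq HS_q(J_\infty(R_{W(\Lambda_0)}))$.

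\textbf{Upper bound via the filtration.} Any increasing filtration preserves Hilbert series, hence $HS_q(J_\infty(A)) = HS_q(gr^G(J_\infty(A)))$. Combining this with Lemma~\ref{HSA} and Proposition~\ref{upper-B} gives
\[ HS_q(J_\infty(A)) \;\leq\; HS_q(J_\infty(B)) \;\leq\; \sum_{n_1,\ldots,n_5 \geq 0} \frac{q^{n_1^2+n_2^2+(n_3+n_5)^2+n_4^2+n_3^2+(2n_3+n_5)n_1+n_4(n_1+n_2)+n_3 n_4}}{(q)_{n_1}(q)_{n_2}(q)_{n_3}(q)_{n_4}(q)_{n_5}}. \]
Setting $y_1=y_2=1$ in Conjecture~\ref{Btype} identifies this last series with ${\rm ch}[W(\Lambda_0)]$ from (\ref{char B}).

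\textbf{Closing the loop.} From the opposite direction, the surjection $\psi \colon J_\infty(R_{W(\Lambda_0)}) \twoheadrightarrow gr^F(W(\Lambda_0))$ yields $HS_q(J_\infty(R_{W(\Lambda_0)})) \geq HS_q(gr^F(W(\Lambda_0))) = {\rm ch}[W(\Lambda_0)]$. Assembling the chain,
\[ {\rm ch}[W(\Lambda_0)] \;\leq\; HS_q(J_\infty(R_{W(\Lambda_0)})) \;\leq\; HS_q(J_\infty(A)) \;\leq\; HS_q(J_\infty(B)) \;\leq\; {\rm ch}[W(\Lambda_0)], \]
every inequality is forced to be an equality. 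Equality of Hilbert series of $J_\infty(A)$ and $J_\infty(R_{W(\Lambda_0)})$ together with the graded surjection between them implies $J_\infty(A) \cong J_\infty(R_{W(\Lambda_0)})$; passing to the degree-one piece gives $A \cong R_{W(\Lambda_0)}$. Similarly, equality of $HS_q(J_\infty(R_{W(\Lambda_0)}))$ and $HS_q(gr^F(W(\Lambda_0)))$ upgrades the surjection $\psi$ to an isomorphism, i.e.\ $W(\Lambda_0)$ is classically free. The only nontrivial input is Conjecture~\ref{Btype}; the rest is a formal Hilbert-series sandwich, so the main obstacle is exactly the $q$-series identity conjectured in the previous statement, which the authors have verified computationally up to $O(q^{70})$ but for which a combinatorial or vertex-algebraic proof (in the spirit of \cite{calinescu2010vertex}, using intertwining operators and simple currents) would be needed to make the corollary unconditional.
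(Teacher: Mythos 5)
Your proof is correct and follows essentially the same Hilbert-series sandwich as the paper, differing only cosmetically: where the paper filters $J_\infty(R_{W(\Lambda_0)})$ itself and compares $gr^G(J_\infty(R_{W(\Lambda_0)}))$ with $gr^G(J_\infty(A))$, you instead apply the right-exact jet functor to the surjection $A\twoheadrightarrow R_{W(\Lambda_0)}$ and then filter $J_\infty(A)$, after which both arguments invoke Lemma~\ref{HSA}, Proposition~\ref{upper-B}, and Conjecture~\ref{Btype} identically. You are slightly more explicit than the paper in deducing $A\cong R_{W(\Lambda_0)}$ from the collapse of the inequalities (the ``pass to the degree-one piece'' step is really the observation that the defining ideal can be recovered from its differential closure via the truncation $J_\infty(P)\to P$), which is a worthwhile addition.
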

\begin{proof}
As in Section 6.3 we can use the same filtration $G$ on $J_{\infty}(R_{W(\Lambda_0)})$. Then using the surjectivity of $\psi$ and previous discussion we have \begin{align*} &{\rm ch}[W(\Lambda_0)](q) \leq HS_{q}(J_{\infty}(R_{W(\Lambda_0)}))= HS_{q}(gr^{G}(J_{\infty}(R_{W(\Lambda_0)})))\\&\leq HS_{q}(gr^{G}(J_{\infty}(A)). \end{align*}
Also from Lemma \ref{HSA}, we get
$$HS_{q}(gr^{G}(J_{\infty}(A)) \leq HS_q(J_{\infty}(B)) = {\rm ch}[W(\Lambda_0)],$$
where in the last equation we use Theorem \ref{Btype} (specialized at $y_1=y_2=1$).
By combining these inequalities we get that the character and all intermediate Hilbert series of jet algebras are equal, therefore,
${\rm ch}[W(\Lambda_0)](q)=HS_{q}(J_{\infty}(R_{W(\Lambda_0)}))$
and $\psi$ is injective.
\end{proof}

\section{Level one principal subspace of $D_4$}

\subsection{Spinor representation of $D_\ell^{(1)}$}

Level one affine vertex algebra $L_1(\so_{2\ell})$ has a spinor realization via $2 \ell$ fermions \cite{feingold1985classical}.
We take
$$\mathcal{F}_{\ell}(Z+1/2):=\Lambda(a_i(-1/2),a_i(-3/2),\cdots ,a_i^*(-1/2),a_i^*(-3/2),...; 1 \leq i \leq \ell).$$ Then the even part of
$$\mathcal{F}_{\ell}(Z+1/2)$$
is isomorphic to $D_\ell^{(1)}$-module $L_{\so_{2\ell}}(\Lambda_0)$, denoted by $L_1(\so_{2 \ell})$ when viewed as a vertex algebra. The  principal subspace $W(\Lambda_0) \subset L_1(\so_{2 \ell})$ is strongly generated by
$$ : a_i a_j :, \ \ \ 1 \leq i < j \leq \ell  \ \ \ \   : a_i a_j^* : \ \ \  1 \leq i < j\leq \ell , $$
all together $\ell(\ell-1)=\ell^2-\ell$ root vectors.

For $D_{4}^{(1)}$, the principal subspace is generated by

$$ :a_i a_j  \\: a_i a_j^*: \\ (1\leq i<j\leq 4), $$
corresponding to $\epsilon_i+\epsilon_j, \epsilon_i-\epsilon_j$, respectively. And its $C_{2}$-algebra, $R_{W(\Lambda_0)}$, is generated by $x_{\epsilon_i+\epsilon_j},x_{\epsilon_i-\epsilon_j}$ $(1\leq i<j\leq 4)$, which we denote by $W_{ij}$ and $V_{ij}$, respectively. And we have the following relations in $R_{W(\Lambda_0)}$:


\begin{align*}
& W_{12}W_{23},\quad W_{12}W_{24},\quad W_{13}W_{34},\quad W_{23}W_{34},\quad W_{12}W_{12},\quad W_{12}W_{13},\quad W_{12}W_{14}, \quad \\ &  W_{23}W_{23}, \quad W_{23}W_{24},\quad W_{24}W_{24},\quad W_{34}W_{34},\quad W_{13}W_{23},\quad W_{14}W_{24},\quad W_{14}W_{34},\quad \\ & W_{24}W_{34},\quad -W_{13}W_{24}=W_{23}W_{14}=W_{12}W_{34}, \quad W_{12}V_{23},\quad W_{12}V_{24},\quad W_{13}V_{34},\quad \\ & W_{23}V_{34},\quad W_{12}V_{13},\quad W_{13}V_{12},\quad W_{12}V_{14},\quad W_{14}V_{12},\quad W_{23}V_{24},W_{24}V_{23},\quad\\&  W_{13}V_{14},\quad W_{14}V_{13},\quad W_{12}V_{12}=W_{13}V_{13}=W_{14}V_{14},\quad W_{23}W_{23}=W_{24}W_{24},\quad \\& -W_{13}V_{24}=W_{23}V_{14}=W_{12}V_{34},\quad -W_{14}V_{23}=W_{24}V_{13},\quad V_{12}V_{12},\quad V_{12}V_{13},\quad V_{12}V_{14},\quad \\ & V_{23}V_{23},\quad V_{23}V_{24},\quad V_{24}V_{24},\quad V_{34}V_{34},\quad V_{13}V_{23},\quad V_{14}V_{24},\quad V_{14}V_{34},\quad V_{24}V_{34}, \quad\\&  -V_{13}V_{24}=V_{23}V_{14},\quad W_{13}V_{23}+W_{23}V_{13}=W_{24}V_{14}+W_{14}V_{24}.\end{align*} We denote by $D$ the algebra $\mathbb{C}[W_{ij},V_{ij}|1\leq i<j\leq 4]/I$, where $I$ is generated by above quadratic relations. We expect that  the algebra $D$ is the $C_{2}$-algebra of $R_{W(\Lambda_0)}.$

\subsection{Jet algebra and quantum dilogarithm}
We first assume \[x_{1}x_{2}=qx_{2}x_{1},\; x_{2}x_{3}=qx_{3}x_{2},\; x_{2}x_{4}=qx_{4}x_{2}.\] Then by using properties of quantum dilogarithm we have  \begin{align*}& \phi(-q^{\frac{1}{2}}x_{4})\phi(-q^{\frac{1}{2}}x_{3})\phi(-q^{\frac{1}{2}}x_{2})\phi(-q^{\frac{1}{2}}x_{1})\\ =& \phi(-q^{\frac{1}{2}}x_{1})\phi(-q x_{2}x_{1})\phi(-q^{\frac{3}{2}}x_{4}x_{2}x_{1})\phi(-q^{\frac{3}{2}}x_{3}x_{2}x_{1})\phi(-q^{\frac{1}{2}}x_{2})\phi(-q^{\frac{5}{2}}x_{4}x_{3}x_{2}x_{1}x_{2})\\ &\phi(-q^{2}x_{4}x_{3}x_{2}x_{1}) \phi(-q x_{4}x_{2})\phi(-q x_{3}x_{2})\phi(-q^{\frac{3}{2}}x_{4}x_{3}x_{2})\phi(-q^{\frac{1}{2}}x_{3})\phi(-q^{\frac{1}{2}}x_{4}).\end{align*} It is equivalent with \begin{align*}\sum_{{\bf (m,n)}\in\mathbb{N}^{12}}\frac{q^{B({\bf m,n})}x_{1}^{\lambda_{1}}x_{2}^{\lambda_{2}}x_{3}^{\lambda_{3}}x_{4}^{\lambda_{4}}}{\prod_{1\leq i<j\leq n}(q)_{m_{ij}}(q)_{n_{ij}}}=\sum_{{\bf k}\in (k_{1},k_{2}k_{3},k_{4})\in \mathbb{N}^{4}}\frac{q^{\frac12 {\bf k}\mathcal{D}{\bf k}^{\top}}x_{1}^{k_{1}}x_{2}^{k_{2}}x_{3}^{k_{3}}x_{4}^{k_{4}}}{(q)_{k_{1}}(q)_{k_{2}}(q)_{k_{3}}(q)_{k_{4}}}, \end{align*} where \begin{itemize}
    \item $\mathcal{D}$ is the Cartan matrix of type $D_{4},$
    \item ${\bf (m,n)}=(m_{ij},n_{ij}|1\leq i<j\leq n)$, \item \begin{align*} &\lambda_{1}=m_{12}+m_{13}+m_{14}+n_{14}+n_{13}+n_{12},\\
    &\lambda_{2}=m_{23}+m_{14}+m_{13}+m_{24}+n_{24}+n_{23}+n_{13}+2n_{12}+n_{14},\\
    &\lambda_{3}=m_{34}+m_{14}+m_{24}+n_{23}+n_{13}+n_{12},\\
    &\lambda_{4}=n_{34}+n_{14}+n_{24}+n_{23}+n_{13}+n_{12},
    \end{align*}
    \item \begin{align*}B({\bf m,n})=&{m_{12}}^2+{m_{12}} {m_{13}}+{m_{12}} {m_{14}}+{m_{12}}
   {n_{12}}+{m_{12}} {n_{13}}+{m_{12}} {n_{14}}+{m_{13}}^2\\ &+{m_{13}}
   {m_{14}}+{m_{13}} {m_{23}}+{m_{13}} {m_{24}}+2 {m_{13}}
  {n_{12}}+{m_{13}} {n_{13}}+{m_{13}} {n_{14}}\\ &+{m_{13}}
   {n_{23}}+{m_{13}} {n_{24}}+{m_{14}}^2+{m_{14}} {m_{24}}+{m_{14}}
   {m_{34}}+{m_{14}} {n_{12}}+{m_{14}} {n_{13}}\\ &+{m_{14}}
   {n_{23}} +{m_{23}}^2+{m_{23}} {m_{24}}+{m_{23}} {n_{12}}+{m_{23}}
   {n_{23}}+{m_{23}} {n_{24}}+{m_{24}}^2\\ & +{m_{24}} {m_{34}}+{m_{24}}
  {n_{12}}+{m_{24}} {n_{23}}+{m_{34}}^2+{m_{34}} {n_{12}}+{m_{34}}
  {n_{13}}+{m_{34}} {n_{23}}\\ & +{n_{12}}^2+{n_{12}} {n_{13}}+{n_{12}}
   {n_{14}}+2 {n_{12}} {n_{23}}+{n_{12}}{n_{24}}+{n_{12}}
  {n_{34}}+{n_{13}}^2\\ &+{n_{13}} {n_{14}}+{n_{13}} {n_{23}}+{n_{13}}
   {n_{34}}+{n_{14}}^2+{n_{14}} {n_{23}}+{n_{14}} {n_{24}}+{n_{14}}
   {n_{34}}+{n_{23}}^2\\ &+{n_{23}} {n_{24}}+{n_{23}}
  {n_{34}}+{n_{24}}^2+{n_{24}} {n_{34}}+{n_{34}}^2.\end{align*}
\end{itemize}

Note the right-hand side of above identity is the character of the principal subspace of $L_1(\so_{8})$. Then we have:
\begin{prop}
The following two statements are equivalent:
\begin{itemize}
    \item[{\bf \romannumeral 1}] The Hilbert series of $J_{\infty}(R_{W(\Lambda_{0}}))$ equals \[\sum_{{\bf (m,n)}\in\mathbb{N}^{12}}\frac{q^{B({\bf m,n})}}{\prod_{1\leq i<j\leq n}(q)_{m_{ij}}(q)_{n_{ij}}}.\]
    \item[{\bf \romannumeral 2}] The principal subspace $W(\Lambda_0)$ of $L_{1}(\so_{8})$ is classically free.
\end{itemize}
\end{prop}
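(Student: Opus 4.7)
The plan is to deduce the equivalence directly from the Arakawa--Li framework of Section 2 together with the quantum dilogarithm computation carried out at the start of this section. Recall from \cite{arakawa2012remark} that there is a canonical surjective Poisson vertex algebra homomorphism
$$\psi \colon J_\infty(R_{W(\Lambda_0)}) \twoheadrightarrow gr^F(W(\Lambda_0)),$$
and by definition $W(\Lambda_0)$ is classically free precisely when $\psi$ is injective, hence an isomorphism. Since $gr^F(W(\Lambda_0))$ and $W(\Lambda_0)$ have the same Hilbert series, classical freeness is equivalent to the numerical identity
$$HS_q(J_\infty(R_{W(\Lambda_0)})) = {\rm ch}[W(\Lambda_0)](q).$$
Surjectivity of $\psi$ already forces $HS_q(J_\infty(R_{W(\Lambda_0)})) \geq {\rm ch}[W(\Lambda_0)](q)$ coefficient-wise, so the substantive content of (ii) is just the reverse inequality.

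Next I would invoke the well-known Georgiev-type character formula for the level one principal subspace of $L_1(so(8))$, namely
$${\rm ch}[W(\Lambda_0)](q) = \sum_{{\bf k} \in \mathbb{N}^4} \frac{q^{\frac12 {\bf k}^\top \mathcal{D} {\bf k}}}{(q)_{k_1}(q)_{k_2}(q)_{k_3}(q)_{k_4}},$$
where $\mathcal{D}$ is the Cartan matrix of type $D_4$. The quantum dilogarithm computation established earlier in this section, after specializing $x_1=x_2=x_3=x_4=1$, identifies this sum with
$$\sum_{({\bf m},{\bf n}) \in \mathbb{N}^{12}} \frac{q^{B({\bf m},{\bf n})}}{\prod_{1 \leq i < j \leq 4} (q)_{m_{ij}}(q)_{n_{ij}}}.$$

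Chaining these identifications gives the proposition in one step: statement (i) says that $HS_q(J_\infty(R_{W(\Lambda_0)}))$ equals the displayed $(m,n)$-sum, which by the dilogarithm identity is ${\rm ch}[W(\Lambda_0)](q)$, which by the first paragraph is equivalent to $\psi$ being an isomorphism, i.e.\ to (ii). Thus the equivalence itself is essentially a bookkeeping exercise once the dilogarithm identity of this section is in hand. The genuine obstacle — which the proposition deliberately leaves open — lies in \emph{proving} either (i) or (ii): that would require constructing a filtration analogous to the $G^{i,j}$ of Section 3 or to the $G$-filtration of Section 6, reducing to a quotient polynomial algebra governed by monomial relations, and bounding the Hilbert series of that jet algebra from above by the $(m,n)$-sum. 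For the equivalence alone, however, no such filtration argument is needed.
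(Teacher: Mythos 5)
Your argument is correct and follows exactly the chain of reasoning the paper leaves implicit: the paper itself gives no explicit proof of this proposition, since it follows immediately from the surjectivity of $\psi$, the equality $HS_q(gr^F(W(\Lambda_0)))={\rm ch}[W(\Lambda_0)]$, the known $\mathbf{k}$-sum character formula for the level one $D_4$ principal subspace, and the dilogarithm identity just established (specialized at $x_i=1$). Your closing observation that the real difficulty lies in proving either side unconditionally is also precisely what the paper's subsequent remark addresses, noting that the filtration method of Section 3 does not yield a tight enough upper bound here.
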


\begin{rem}
 For algebra $D$, we can choose filtration $G^{1}$ and $G^{2}$ by letting $G^{1}_{0}$ and $G^{2}_{0}$ be generated with $\left\{ V_{23},V_{24}\right\}$ and $\left\{V_{14},W_{24}\right\}$, respectively. And let \[G^{1}_{s}={\rm span}\left\{(W_{ij})_{(n)}v,(V_{ij})_{(n)}v|n\leq 1, V_{ij}\neq V_{23},\, \text{or}\, V_{24} , v\in G^{1}_{s-1}\right\}+G^{1}_{s-1},\] and  \[G^{2}_{s}={\rm span}\left\{(W_{ij})_{(n)}v,(V_{ij})_{(n)}v|n\leq 1, V_{ij}\neq V_{14}, W_{ij}\neq W_{24} , v\in G^{2}_{s-1}\right\}+G^{2}_{s-1}.\] Then following the similar argument in Proposition \ref{filtra}, it is not hard to see that $HS_{q}(J_{\infty}(D))$ is less than or equal to   \[\sum_{{\bf (m,n)}\in\mathbb{N}^{12}}\frac{q^{B'({\bf m,n})}}{\prod_{1\leq i<j\leq n}(q)_{m_{ij}}(q)_{n_{ij}}},\] where $B'({\bf m,n})$ differs from $B({\bf m,n})$ by $n_{12}n_{23}$ and  $n_{12}m_{13},$ i.e., $$B({\bf m,n})-B'({\bf m,n})=n_{12}n_{23}+n_{12}m_{13}.$$   The method we have used to prove the classically freenes in the case of $\fsl_{n}$  fails here, since the "filtration procedure" does not give us a satisfactory upper bound of $HS_{q}(J_{\infty}(D))$.
\end{rem}

In \cite{feigin2010zhu}, a characterization of the $C_{2}$-algebra of the affine vertex algebra of type $C$ at non-negative integer level  $k$ was given. For orthogonal series, a certain quotient of the $C_{2}$-algebra of the affine vertex algebra of type $D$ at non-negative level was also determined. We hope to use their results to further investigate the freeness of the affine vertex algebra of type $C$ and $D$ in the future.

\section{Level one principal subspace of $G_2$}

In this part we consider the principal subspace $W(\Lambda_0)$ associated to the exceptional Lie algebra of type $G_2$.  
We denote the root system of type $G_2$ by $\Delta$ and let $\Delta_+=\{ \alpha_1,\alpha_2, \alpha_1+\alpha_2, \alpha_1+2\alpha, \alpha_1+3 \alpha_2,2 \alpha_1+3 \alpha_2 \}$ be the 
set of positive roots. In our computations, we choose a specific realization of the exceptional Lie algebra $\frak{g}_2$ where the root vectors $x_\alpha$, $\alpha \in \Delta$ are chosen as in \cite{Hesselink}. 
Then as before we consider $\frak{n}={\rm Span} \{ x_{\alpha}: \alpha \in \Delta_+ \}$ and construct the principal subspace vertex algebra $W(\Lambda_0) \subset L_1(\frak{g}_2)$.
Let
$$x:=x_{\alpha_1},y:=x_{\alpha_1+3 \alpha_2},z:=x_{2\alpha_1+3 \alpha_2},r := x_{\alpha_2},s:=x_{\alpha_1+ \alpha_2},v:=x_{\alpha_1+2 \alpha_2},$$
viewed now as elements $R_{W(\Lambda_0)}$.
Then we have the following relations in $R_{W(\Lambda_0)}$ (and also in $R_{L(\Lambda_0)})$, which follow by straightforward computation:
$$x^{2}=0,\,y^{2}=0,\,z^{2}=0,\,y\,z=0,\,z\,v=0,\,y\,v=0,\,2\,y\,s+v^{2}=0,\,y\,r=0,\,s^{2}-2\,x\,v=0,$$
$$\,x\,s=0,\,4\,z\,r-2\,v^{2}=0,\,z\,s=0,\,x\,z=0,\,-x\,y+s\,v=0,\,r^{4}=0,$$
$$\,s\,v^{2}=0,\,r^{2}v=0,\,r^{3}s=0 $$
Denote by $I \subset \mathbb{C}[x,y,z,r,s,v]$ the ideal generated by the left-hand sides of the relations above.
Then we expect that 
$$J_\infty(R_{W(\Lambda_0)}) \cong J_{\infty}(\mathbb{C}[x,y,z,r,s,v]/I) \cong {\rm gr}(W(\Lambda_0)).$$
These isomorphisms imply:
\begin{conj} We have an equality of $q$-series
$$HS_q(J_{R_{W(\Lambda_0)}}) ={\rm ch}[W(\Lambda_0)].$$
In particular, $W(\Lambda_0)$ is classically free.
\end{conj}
We verified the conjecture modulo $O(q^5)$ using Macaulay2.








\vskip 5mm
\bibliography{identity}
\bibliographystyle{alpha}

\end{document}